\newtheorem{thm}{Theorem}[section]
\newtheorem{definition}[thm]{Definition}
\newtheorem{theorem}[thm]{Theorem}
\newtheorem*{oseledec*}{Oseledec Theorem}
\newtheorem{cor}[thm]{Corollary}
\newtheorem{claim}[thm]{Claim}
\newtheorem{lemma}[thm]{Lemma}
\newtheorem{prop}[thm]{Proposition}
\def\moverlay{\mathpalette\mov@rlay}
\def\mov@rlay#1#2{\leavevmode\vtop{%
   \baselineskip\z@skip \lineskiplimit-\maxdimen
   \ialign{\hfil$\m@th#1##$\hfil\cr#2\crcr}}}
\newcommand{\charfusion}[3][\mathord]{
    #1{\ifx#1\mathop\vphantom{#2}\fi
        \mathpalette\mov@rlay{#2\cr#3}
      }
    \ifx#1\mathop\expandafter\displaylimits\fi}
\newcommand{\bigcupdot}{\charfusion[\mathop]{\bigcup}{\cdot}}
\let\ul\underline
\def\CAR{\ensuremath{\curvearrowright}}
\def\Sig{\ensuremath{\widehat{\Sigma}}}
\def\Sista{\ensuremath{\overset{\star}{\Sigma}}}
\newcommand\mathmiddlescript[1]{\vcenter{\hbox{$\scriptstyle #1$}}}
\def\Sistasharpepsilon{\ensuremath{\overset{\star}{\Sigma}\mathmiddlescript{^{_\epsilon^{^\#}}}}}
\def\Vista{\ensuremath{\overset{\star}{\mathcal{V}}}}
\def\epsista{\ensuremath{\overset{\star}{\epsilon}}}
\def\Eista{\ensuremath{\overset{\star}{\mathcal{E}}}}
\def\Gista{\ensuremath{\overset{\star}{\mathcal{G}}}}
\def\Aista{\ensuremath{\overset{\star}{\mathcal{A}}}}
\def\Zista{\ensuremath{\overset{\star}{Z}}}
\def\HWT{\ensuremath{\mathrm{HWT}}}
\title{Canonically Codable Points and Irreducible Codings}
\author{Snir Ben Ovadia}
\begin{document}
\maketitle
\begin{abstract}
Let $f:M\rightarrow M$ be a $C^{1+\beta}$ diffeomorphism, where $\beta>0$ and $M$ is a compact Riemannian manifold without boundary. In \cite{Sarig}, for all $\chi>0$, for every small enough $\epsilon>0$, Sarig had first constructed a coding $\widehat{\pi}:\Sig\rightarrow M$ which covers the set of all Lyapunov regular $\chi$-hyperbolic points when $\mathrm{dim}M=2$, where $\Sig$ is a topological Markov shift (TMS) over a locally-finite and countable directed graph. $\widehat{\pi}$ is H\"older continuous, and is finite-to-one on $\Sig^\#:=\{\underline{u}\in\Sig:\exists v,w\text{ s.t. }\#\{i\geq0:u_i=v\}=\infty, \#\{i\leq0:u_i=w\}=\infty\}$; and $\widehat{\pi}[\Sig^\#]\supseteq \{\text{Lyapunov regualr and temperable }\chi\text{-hyperbolic points}\}$. We later extended Sarig's result for the case $\mathrm{dim}M\geq2$ in \cite{SBO}. In this work, we offer an improved construction for \cite{SBO} such that ($\forall\epsilon>0$ small enough) we could identify canonically the set $\widehat{\pi}[\Sig^\#]$. We introduce the notions of $\chi$-summable, and $\epsilon$-weakly temperable points.

In \cite{BCS}, the authors show that for each homoclinic class of a periodic hyperbolic point $p$, there exists a maximal irreducible component $\widetilde{\Sigma}\subseteq\Sig$ s.t. all invariant ergodic probability $\chi$-hyperbolic measures which are carried by the homoclinic class of $p$ can be lifted to $\widetilde{\Sigma}$. We use their construction in the context of ergodic homoclinic classes, to show the stronger claim, $\widehat{\pi}[\widetilde{\Sigma}\cap\Sig^\#]=H(p)$ modulo all conservative (possibly infinite) measures ($\mathrm{dim}M\geq2$); where $H(p)$ is the ergodic homoclinic class of $p$, as defined in \cite{RodriguezHertz}, with the (canonically identified) recurrently-codable points replacing the Lyapunov regular points in the definition in \cite{RodriguezHertz}. 
\end{abstract}

\tableofcontents
\section{Motivation and Introduction}\label{motivation}
Symbolic dynamics are a powerful tool which allows us to derive many strong conclusions on smooth dynamical systems which admit them. For example, construction of Gibbs and SRB measures for uniformly hyperbolic systems \cite{B4,BowenMarcus,BowenRuelle}, classification of toral automorphisms (in the monumental work of Adler and Weiss) \cite{AW70}, counting periodic points \cite{Sarig}, counting of measures of maximal entropy \cite{BCS}, and many others. In the general setup of a non-uniformly hyperbolic diffeomorphism of a compact boundaryless Riemannian manifold, in \cite{Sarig}, Sarig had constructed a Markov partition when the dimension of the manifold is 2, and we later extended his result to any dimension in \cite{SBO}. Generally, diffeomorphisms might have many points which are not hyperbolic (elliptic islands, homoclinic tangency, etc.); thus the codings of \cite{SBO,Sarig} usually code a set $\subsetneq M$. However, not every point which demonstrates hyperbolic behavior is necessarily covered by these Markov partitions. The authors in both \cite{Sarig,SBO} construct a set of Lyapunov regular points which are covered by the Markov partition; but this comes with two clear disadvantages: 1) Lyapunov regularity restricts us to the study of probability measures, which are known to be carried by the Lyapunov regular points, but do not include the rich ergodic theory of infinite conservative invariant measures (see Definition \ref{conservativeMeasures} for the definition of a conservative measure in our context). 2) There could be many other points which demonstrate the relatively-easy-to-study behavior of the Markov structure, which are not Lyapunov regular, and which are being ignored- thus not using the strength of the coding to its fullest. One could simply choose to work with the image of the coding, or the set covered by the Markov partition, but this prevents us from making any new definitions and extending objects, since defining them based on a specific construction seems less natural. We therefore wish to find a Markov partition (or coding) and a set of hyperbolic points with the following three properties: 1) The Markov partition covers that set of hyperbolic points; 2) Every point which is covered by the Markov partition belongs to that set of hyperbolic points; 3) That set of hyperbolic points is defined canonically, and not by the specific coding (i.e. based on the quality of hyperbolicity of a point along its orbit, and not on a specific choice or construction). We offer such a Markov partition and such a set of hyperbolic points, by presenting an improved way to carry-out the construction in \cite{SBO}, and its analysis. 
This paper treats diffeomorphisms. The case of flows brings in new difficulties, because of issues related to singularities in the Poincar\'e section. The paper \cite{SL14} codes a smaller set than the set of Lyapunov regular points. The author has been informed by Y. Lima that together with J. Buzzi and S. Crovisier he now has a coding which captures a larger set than the set of Lyapunov regular orbits (work in progress). It would be interesting to know if the set of coded points can be characterized completely as we do in this paper for discrete time systems.

In \cite{Smale}, Smale introduced the notion of the homoclinic relation between two orbits of periodic hyperbolic points by, $O\sim O'\iff $ the global stable leaf of a point in $O$ intersects transversely with full codimension the global unstable leaf of a point in $O'$, and vice-versa. In \cite{NewhousePeriodicEquivalenceRelation}, Newhouse showed that this relation is in fact an equivalence relation, and so the notion of a homoclinic class of a periodic hyperbolic point rose naturally. The closure of $\{O': O\sim O'\}$ is a closed and transitive set, and as such it is used often in the studying of transitive hyperbolic dynamics.
In \cite{RodriguezHertz}, Rodriguez-Hertz,
Rodriguez-Hertz, Tahzibi and Ures have introduced a new notion to consider hyperbolic points which are associated with the orbit of a hyperbolic periodic point, which is called an ergodic homoclinic class. They have shown that ergodic homoclinic classes hold the property of admitting at most one SRB measure. We consider this object with the larger set of hyperbolic points (as mentioned in the paragraph above) replacing the Lyapunov regular points.\footnote{This does not harm the uniqueness of SRB measures, as every invariant probability measure is carried by the Lyapunov regular points regardless.} This allows us to not restrict ourselves to probability measures, while having a canonical way of studying conservative (possibly infinite) invariant measures with the powerful tool of symbolic dynamics. We show in addition that an ergodic homoclinic class, admits a point with a forward orbit which is dense in a set which carries all conservative measures (possibly infinite) on it. More generally, an ergodic homoclinic class has a subset which carries all conservative measures on it, and which can be coded by an irreducible component.

\section{Definitions and Basic Properties}\text{ }

\medskip
This work uses tools which were previously developed in \cite{SBO,Sarig}. In the following subsection, we introduce two notions of hyperbolic points, in order to have a canonical\footnote{In this context, ``canonical" means definitions which do not rely on a specific construction of symbolic dynamics, but which depend only on the quality of hyperbolicity of the orbit of the point.} characterization for a set of points which our symbolic extension codes (see \cite{SBO,Sarig}).
\subsection{A Set of Hyperbolic and Weakly Temperable Points}

Let $M$ be a compact Riemannian manifold without boundary, of dimension $d\geq 2$. Let $f\in\mathrm{Diff}^{1+\beta}(M), \beta>0$ (i.e. $f$ is invertible, $f, f^{-1}$ are differentiable, and both $d_\cdot f, d_\cdot f^{-1}$ are $\beta$-H\"older continuous). $\forall x\in M$, $\langle \cdot,\cdot\rangle_x:T_xM\times T_xM\rightarrow \mathbb{R}$ is the inner product on the tangent space of $x$ given by the Riemannian metric. $| \cdot |_x: T_xM\rightarrow\mathbb{R}$ is the norm induced by the inner product, $|\xi|_x^2:=\langle \xi,\xi\rangle_x$, $\forall \xi\in T_xM$. We often omit the $x$ subscript of the inner product and of the norm, when the tangent space in domain is clear by their argument.  

\noindent\textbf{Notations:}\begin{enumerate}
	\item For every $a,b\in\mathbb{R}$, $c\in\mathbb{R}^+$, $a=e^{\pm c}\cdot b$ means $e^{-c} \cdot b\leq a\leq e^c\cdot b$.
	\item $\forall a,b\in\mathbb{R}$, $a\wedge b:=\min\{a,b\}$.
	\item For every topological Markov shift $\Sigma$ induced by a graph $\mathcal{G}:=(\mathcal{V},\mathcal{E})$ (e.g. Theorem \ref{mainSBO}), $\forall v\in \mathcal{V}$, $[v]:=\{\ul{u}\in \Sigma: u_0=v\}$. 
\end{enumerate}

\begin{definition}\label{conservativeMeasures}
	Let $(X,\mathcal{B},\mu,T)$ be an invertible (perhaps infinite) measure preserving transformation. $\mu$ is said to be {\em conservative}, if it satisfies the statement of the Poincar\'e recurrence theorem. I.e.
	$$\forall A\in\mathcal{B},\text{ }\mu(A\setminus\{x\in A:\exists n_k,m_k\uparrow\infty\text{ s.t. }f^{n_k}(x), f^{-m_k}(x)\in A,\forall k\geq0\})=0.$$ 
\end{definition}

\begin{definition}\label{ChiHyp}\text{ }

\begin{enumerate}
         \item \begin{align*}
        \chi\mathrm{-summ}:=&\{x\in M: \exists\text{ a splitting }T_xM=H^s(x)\oplus H^u(x)\text{ s.t. }\\
        &\sup_{\xi_s\in H^s(x),|\xi_s|=1}\sum_{m=0}^\infty|d_xf^m\xi_s|^2e^{2\chi m}<\infty,\sup_{{\xi_u\in H^u(x),|\xi_u|=1}}\sum_{m=0}^\infty|d_xf^{-m}\xi_u|^2e^{2\chi m}<\infty\}.
    \end{align*}
    \item 
    \begin{align*}
    \chi\mathrm{-hyp}:=&\{x\in M: \exists\text{ a splitting }T_xM=H^s(x)\oplus H^u(x)\text{ s.t. }\forall\xi_s\in H^s(x)\setminus\{0\},\xi_u\in H^u(x)\setminus\{0\},\\ 
    &\limsup_{n\rightarrow\infty}\frac{1}{n}\log|d_xf^{n}\xi_s|,\limsup_{n\rightarrow\infty}\frac{1}{n}\log|d_xf^{-n}\xi_u|<-\chi\}.
    \end{align*}
    \item We define for each $x\in\chi\mathrm{-hyp}$, $$\chi(x):=-\max\{\sup_{\xi_s\in H^s(x)}\limsup_{n\rightarrow\infty}\frac{1}{n}\log|d_xf^{n}\xi_s|,\sup_{\xi_u\in H^u(x)}\limsup_{n\rightarrow\infty}\frac{1}{n}\log|d_xf^{-n}\xi_u|\}>\chi.$$
    \item $\forall x\in \chi\text{-summ}$, $\forall \xi,\eta\in T_xM$, write $\xi=\xi_s+\xi_u, \eta=\eta_s+\eta_u $ with $\xi_s,\eta_s\in H^s(x), \xi_u,\eta_u\in H^u(x)$, 
	\begin{align*}
		\langle \xi_s,\eta_s\rangle_{x,s}':=&2\sum_{m=0}^\infty\langle d_xf^m\xi_s, d_xf^m\eta_s\rangle_xe^{2\chi m},\\
			\langle \xi_u,\eta_u\rangle_{x,u}':=&2\sum_{m=0}^\infty\langle d_xf^{-m}\xi_u, d_xf^{-m}\eta_u\rangle_xe^{2\chi m},\\
			\langle \xi,\eta\rangle_{x}':=&\langle \xi_s,\eta_s\rangle_{x,s}'+ \langle \xi_u,\eta_u\rangle_{x,u}'.				
	\end{align*}
\end{enumerate}

\end{definition}


Notice that $\chi\mathrm{-hyp}\subseteq\chi\mathrm{-summ}$. For each $x\in\chi\mathrm{-summ}$, write $s(x):=\mathrm{dim}(H^s(x)),u(x):=\mathrm{dim}(H^u(x))$. The following theorem is a version of the Pesin-Oseldec reduction theorem, which we prove in \cite[Theorem~2.4,Defintion.~2.5]{SBO}).
\begin{theorem}\label{defOfC}
$\forall x\in \chi$-summ, $\exists C_\chi(x):\mathbb{R}^d\rightarrow T_xM$ a linear invertible map, such that $\forall u,v\in\mathbb{R}^d$, $\langle C_\chi^{-1}(x)u, C_\chi^{-1}(x)v\rangle_x'=\langle u,v\rangle_2$, where $\langle\cdot ,\cdot\rangle_2$ is the Euclidean inner product on $\mathbb{R}^d$. 
In addition,
\begin{equation}\label{train}
C_\chi^{-1}(f(x))\circ d_xf\circ C_\chi(x)=\begin{pmatrix}D_s(x)  &   \\  & D_u(x)
\end{pmatrix},
\end{equation}
where $D_s(x),D_u(x)$ are square matrices of dimensions $s(x),u(x)$ respectively, and $\|D_s(x)\|,\|D_u^{-1}(x)\|\leq e^{-\chi}$,$\|D_s^{-1}(x)\|,\|D_u(x)\|\leq \kappa$ for some constant $\kappa=\kappa(f,\chi)>1$.
\end{theorem}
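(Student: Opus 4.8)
\medskip
\noindent\emph{Proof strategy.}
The plan is to run the standard Pesin--Oseledec reduction, with the Lyapunov inner product replaced by the summable weighted form $\langle\cdot,\cdot\rangle_x'$ of Definition \ref{ChiHyp}. First I would verify that $\langle\cdot,\cdot\rangle_x'$ is a genuine inner product on $T_xM$ in which $H^s(x)\perp H^u(x)$: bilinearity and symmetry are clear; the series converge because membership in $\chi$-summ forces absolute convergence of $\sum_m|d_xf^m\xi_s|^2e^{2\chi m}$ and of $\sum_m|d_xf^{-m}\xi_u|^2e^{2\chi m}$ on all of $H^s(x)$ and $H^u(x)$ (finiteness on a basis, then Cauchy--Schwarz for the cross terms), hence on $T_xM$; positive-definiteness holds since the $m=0$ summands already give $\langle\xi,\xi\rangle_x'\ge 2|\xi_s|_x^2+2|\xi_u|_x^2$; and orthogonality of $H^s(x),H^u(x)$ is built into the formula $\langle\xi,\eta\rangle_x'=\langle\xi_s,\eta_s\rangle_{x,s}'+\langle\xi_u,\eta_u\rangle_{x,u}'$. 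I would also record the elementary comparisons $|\xi|_x^2\le\tfrac12\|\xi\|_x'^2$ for $\xi\in H^s(x)\cup H^u(x)$ and $|\xi|_x^2\le\|\xi\|_x'^2$ in general.

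Next, the splitting is $d_xf$-invariant. If $\xi\in H^s(x)$, the cocycle identity $d_{f(x)}f^m\circ d_xf=d_xf^{m+1}$ gives $\sum_{m\ge 0}|d_{f(x)}f^m(d_xf\xi)|^2e^{2\chi m}=e^{-2\chi}\sum_{m\ge 1}|d_xf^m\xi|^2e^{2\chi m}<\infty$, so $d_xf\xi$ lies in the space of $\chi$-summable stable directions at $f(x)$; symmetrically for $H^u(x)$ using $f^{-1}$; comparing dimensions and using that $H^s\oplus H^u$ spans (together with the canonical choice of the splitting inherited from \cite{SBO}) yields $d_xfH^s(x)=H^s(f(x))$ and $d_xfH^u(x)=H^u(f(x))$. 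Now let $C_\chi(x):\mathbb{R}^d\to T_xM$ send the first $s(x)$ standard basis vectors to a $\langle\cdot,\cdot\rangle_x'$-orthonormal basis of $H^s(x)$ and the remaining $u(x)$ standard basis vectors to a $\langle\cdot,\cdot\rangle_x'$-orthonormal basis of $H^u(x)$, extended linearly. By construction $C_\chi(x)$ is a linear isomorphism, it is an isometry from $(\mathbb{R}^d,\langle\cdot,\cdot\rangle_2)$ onto $(T_xM,\langle\cdot,\cdot\rangle_x')$ --- which is the first assertion --- and it carries $\mathbb{R}^{s(x)}\times\{0\}$ onto $H^s(x)$ and $\{0\}\times\mathbb{R}^{u(x)}$ onto $H^u(x)$; combined with the $d_xf$-invariance of the splitting, $C_\chi^{-1}(f(x))\circ d_xf\circ C_\chi(x)$ preserves the decomposition $\mathbb{R}^{s(x)}\oplus\mathbb{R}^{u(x)}$ and hence has the block form $\mathrm{diag}(D_s(x),D_u(x))$.

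It remains to estimate the four operator norms, by reindexing the weighted series and using that $C_\chi(x),C_\chi(f(x))$ are isometries. For $\xi\in H^s(x)$ with $\|\xi\|_x'=1$ one gets $\|d_xf\xi\|_{f(x)}'^{\,2}=e^{-2\chi}\big(\|\xi\|_x'^{\,2}-2|\xi|_x^2\big)\le e^{-2\chi}$, so $\|D_s(x)\|\le e^{-\chi}$; the mirror computation with $f^{-1}$ on $H^u$ gives $\|D_u^{-1}(x)\|\le e^{-\chi}$. For $\xi\in H^u(x)$ with $\|\xi\|_x'=1$, shifting the index in the $u$-sum gives $\|d_xf\xi\|_{f(x)}'^{\,2}=e^{2\chi}\|\xi\|_x'^{\,2}+2|d_xf\xi|_{f(x)}^2\le e^{2\chi}+2L^2|\xi|_x^2\le e^{2\chi}+L^2$, where $L:=\sup_{x\in M}\max\{\|d_xf\|,\|d_xf^{-1}\|\}$ is finite since $M$ is compact and $f\in\mathrm{Diff}^1(M)$; hence $\|D_u(x)\|\le\kappa$ with $\kappa:=\sqrt{e^{2\chi}+L^2}>1$ depending only on $f$ and $\chi$, and the analogous estimate with $f^{-1}$ on $H^s$ gives $\|D_s^{-1}(x)\|\le\kappa$.

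The only step that is more than bookkeeping with the defining series is the $d_xf$-invariance of the splitting together with the requirement that $H^s(x),H^u(x)$ be chosen canonically (so that $x\mapsto C_\chi(x)$ is well-defined); this is precisely where one imports the reduction theorem of \cite[Theorem~2.4]{SBO}. The convergence, positivity, orthogonality, the isometry property, the block form, and the four norm bounds are all elementary consequences of the structure of the weighted $\ell^2$ sums.
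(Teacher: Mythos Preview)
Your argument is correct and is precisely the standard Pesin--Oseledec reduction carried out with the weighted $\ell^2$ form $\langle\cdot,\cdot\rangle_x'$; the paper does not prove this statement in-text but cites \cite[Theorem~2.4, Definition~2.5]{SBO}, whose content is exactly the construction you sketch (orthonormalize in the Lyapunov inner product, read off the block form from invariance of the splitting, and obtain the norm bounds by the reindexing identities $\|d_xf\xi\|'^{\,2}=e^{-2\chi}(\|\xi\|'^{\,2}-2|\xi|^2)$ on $H^s$ and $\|d_xf\xi\|'^{\,2}=e^{2\chi}\|\xi\|'^{\,2}+2|d_xf\xi|^2$ on $H^u$). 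Your caveat about the canonical choice of the splitting (needed so that $C_\chi(f(x))$ refers to the image splitting $d_xf\,H^{s/u}(x)$) is exactly the point the paper defers to \cite{SBO}; with that convention in place your verification of $\|D_s\|,\|D_u^{-1}\|\le e^{-\chi}$ and $\|D_s^{-1}\|,\|D_u\|\le\sqrt{e^{2\chi}+L^2}=:\kappa$ is complete.
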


\begin{claim}\label{contraction}
$\forall x\in \chi$-summ, $C_\chi(x)$ is a contraction.
\end{claim}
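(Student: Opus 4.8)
The plan is to use that $C_\chi(x)$ is, by construction, a linear isometry from $(\mathbb{R}^d,\langle\cdot,\cdot\rangle_2)$ onto $(T_xM,\langle\cdot,\cdot\rangle_x')$; in particular $|C_\chi(x)u|_x'=|u|_2$ for every $u\in\mathbb{R}^d$, where $|\cdot|_x'$ denotes the norm induced by $\langle\cdot,\cdot\rangle_x'$. Since ``$C_\chi(x)$ is a contraction'' means $|C_\chi(x)u|_x\le|u|_2$ for all $u$ (operator norm at most $1$ with respect to the Euclidean norm on the source and the Riemannian norm on the target), it suffices to show that the modified norm dominates the Riemannian norm on $T_xM$, i.e. $|v|_x\le|v|_x'$ for every $v\in T_xM$.

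To prove this domination, fix $v\in T_xM$ and write $v=v_s+v_u$ with $v_s\in H^s(x)$ and $v_u\in H^u(x)$. Expanding $(|v|_x')^2=\langle v_s,v_s\rangle_{x,s}'+\langle v_u,v_u\rangle_{x,u}'$ via Definition \ref{ChiHyp}(4) and retaining only the $m=0$ summands (the remaining ones being non-negative) gives $(|v|_x')^2\ge 2|v_s|_x^2+2|v_u|_x^2$. Conversely, the triangle inequality together with $2ab\le a^2+b^2$ yields $|v|_x^2=|v_s+v_u|_x^2\le(|v_s|_x+|v_u|_x)^2\le 2|v_s|_x^2+2|v_u|_x^2$. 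Comparing the two inequalities gives $|v|_x^2\le(|v|_x')^2$, hence $|v|_x\le|v|_x'$; applying this with $v=C_\chi(x)u$ finishes the proof.

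I do not expect any real obstacle here; the single point worth checking is that the factor $2$ in the definitions of $\langle\cdot,\cdot\rangle_{x,s}'$ and $\langle\cdot,\cdot\rangle_{x,u}'$ is exactly what is needed to absorb the factor $2$ lost in bounding $|v_s+v_u|_x^2$ by $2(|v_s|_x^2+|v_u|_x^2)$ across the splitting $H^s(x)\oplus H^u(x)$, which need not be orthogonal for the Riemannian metric. Note that the series defining $\langle\cdot,\cdot\rangle_x'$ converge precisely because $x\in\chi$-summ, and that neither the invertibility of $C_\chi(x)$ nor the intertwining relation \eqref{train} plays any role in this claim --- only the isometry statement in Theorem \ref{defOfC}.
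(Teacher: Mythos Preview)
Your argument is correct and is exactly the standard one: the paper does not give an in-text proof but simply cites \cite[Lemma~2.9]{SBO} and \cite[Lemma~2.5]{Sarig}, and the proof there proceeds precisely by comparing $|\cdot|_x$ to $|\cdot|_x'$ via the $m=0$ term and the elementary bound $|v_s+v_u|_x^2\le 2|v_s|_x^2+2|v_u|_x^2$, which is absorbed by the factor $2$ in Definition~\ref{ChiHyp}(4). One small remark: the isometry statement in Theorem~\ref{defOfC} as written has a typo (it should read $\langle C_\chi(x)u,C_\chi(x)v\rangle_x'=\langle u,v\rangle_2$ for $u,v\in\mathbb{R}^d$, or equivalently $\langle\xi,\eta\rangle_x'=\langle C_\chi^{-1}(x)\xi,C_\chi^{-1}(x)\eta\rangle_2$ for $\xi,\eta\in T_xM$), but you have interpreted it correctly.
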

See \cite[Lemma~2.9]{SBO},\cite[Lemma~2.5]{Sarig}.

\begin{definition}\label{canonicalHypQuant}
	Let $x\in\chi$-summ with $T_xM=H^s(x)\oplus H^u(x)$,
	\begin{enumerate}
		\item $$c_\chi(x):=\sup_{\xi_s\in H^s(x), \xi_u\in H^u(x):|\xi_s+\xi_u|=1}\sqrt{2\sum_{m\geq0}|d_xf^m\xi_s|^2e^{2\chi m}+2\sum_{m\geq0}|d_xf^{-m}\xi_u|^2e^{2\chi m}}.$$
		\item $\forall\xi_s\in H^s(x),\xi_u\in H^u(x)$, $$S^2(x,\xi_s):=2\sum_{m\geq0}|d_xf^m\xi_s|^2e^{2\chi m},\text{ }U^2(x,\xi_u):=2\sum_{m\geq0}|d_xf^{-m}\xi_u|^2e^{2\chi m}.$$
	\end{enumerate}
\end{definition}
$c_\chi(x)$ is a measurement of the hyperbolicity of $x$ w.r.t decomposition $T_xM=H^s(x)\oplus H^u(x)$- the greater it is, the worse the hyperbolicity (i.e slow contraction/expansion on stable/unstable tangent spaces, or small angle between the stable and unstable tangent spaces).

\begin{claim}\label{inversenorm}
	For $x\in \chi\mathrm{-summ}$, $\|C_\chi^{-1}(x)\|=c_\chi(x)$.
\end{claim}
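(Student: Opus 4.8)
The plan is to read the identity off Theorem~\ref{defOfC}, of which it is essentially a reformulation. Write $\|\xi\|_x':=\sqrt{\langle\xi,\xi\rangle_x'}$ for $\xi\in T_xM$; by Definition~\ref{ChiHyp}(4) this is a genuine norm on $T_xM$ (positive definiteness follows already from the $m=0$ terms of the two series), and Theorem~\ref{defOfC} asserts exactly that $C_\chi^{-1}(x)$ carries $\langle\cdot,\cdot\rangle_x'$ to the Euclidean inner product on $\mathbb{R}^d$, i.e.\ $C_\chi^{-1}(x):(T_xM,\|\cdot\|_x')\to(\mathbb{R}^d,|\cdot|_2)$ is a linear isometry. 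Consequently $|C_\chi^{-1}(x)\xi|_2=\|\xi\|_x'$ for every $\xi\in T_xM$.

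Next I would compute the operator norm, recalling that $\|C_\chi^{-1}(x)\|$ is taken with the Riemannian norm $|\cdot|_x$ on the domain $T_xM$ and the Euclidean norm on $\mathbb{R}^d$ (the convention of \cite{SBO,Sarig}), so that
\[
\|C_\chi^{-1}(x)\|=\sup_{\xi\in T_xM,\ |\xi|_x=1}|C_\chi^{-1}(x)\xi|_2=\sup_{\xi\in T_xM,\ |\xi|_x=1}\|\xi\|_x'.
\]
Decomposing a unit vector as $\xi=\xi_s+\xi_u$ with $\xi_s\in H^s(x)$, $\xi_u\in H^u(x)$, and expanding via Definition~\ref{ChiHyp}(4),
\[
\|\xi\|_x'^2=\langle\xi_s,\xi_s\rangle_{x,s}'+\langle\xi_u,\xi_u\rangle_{x,u}'=2\sum_{m\geq0}|d_xf^m\xi_s|^2e^{2\chi m}+2\sum_{m\geq0}|d_xf^{-m}\xi_u|^2e^{2\chi m}=S^2(x,\xi_s)+U^2(x,\xi_u),
\]
so $\sup_{|\xi|_x=1}\|\xi\|_x'$ is precisely the supremum over unit vectors $\xi_s+\xi_u$ of $\sqrt{S^2(x,\xi_s)+U^2(x,\xi_u)}$, which is the definition of $c_\chi(x)$ in Definition~\ref{canonicalHypQuant}(1). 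Hence $\|C_\chi^{-1}(x)\|=c_\chi(x)$.

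There is no real obstacle here; the argument is bookkeeping. The only points worth keeping in mind are that $x\in\chi\mathrm{-summ}$ guarantees convergence of the two series (so $c_\chi(x)<\infty$ and $C_\chi^{-1}(x)$ is bounded), and that, $T_xM$ being finite-dimensional, $\xi\mapsto\|\xi\|_x'$ is continuous on the compact unit sphere of $(T_xM,|\cdot|_x)$, so the suprema above are in fact attained — though this last remark is not needed for the stated equality itself.
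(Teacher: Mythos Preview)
Your argument is correct and is exactly the unpacking the paper intends: the paper does not give a detailed proof but simply refers to the defining property of $C_\chi(\cdot)$ in \cite[Theorem~2.4]{SBO} (i.e.\ Theorem~\ref{defOfC} here), and your computation is precisely how one reads $\|C_\chi^{-1}(x)\|=c_\chi(x)$ off that isometry property together with Definitions~\ref{ChiHyp}(4) and~\ref{canonicalHypQuant}.
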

This fact follows from the definition of the maps $C_\chi(\cdot)$, and can be seen in \cite[Theorem~2.4]{SBO} where they are defined. It is important to note that the expression for the norm of $C_\chi^{-1}$ depends only on the decomposition $H^s(x)\oplus H^u(x)$, independently of the choice of $C_\chi(\cdot)$. 

\medskip

\begin{theorem}\label{mainSBO}
	 $\forall \chi>0$ s.t. $\exists p\in\chi$-$\mathrm{hyp}$ a periodic hyperbolic point, $\exists \epsilon_\chi>0$ (which depends on $M,f,\beta,\chi$) s.t. $\forall 0<\epsilon\leq \epsilon_\chi$ $\exists$ a countable and locally-finite directed graph $\mathcal{G}= \left(\mathcal{V},\mathcal{E}\right)= \left(\mathcal{V}(\epsilon), 
\mathcal{E}(\epsilon)\right)$ which induces a topological Markov shift $
\Sigma=\Sigma(\chi,\epsilon):=\{\ul{u}\in\mathcal{V}^\mathbb{Z}:
(u_i,u_{i+1})\in	\mathcal{E},\forall i\in\mathbb{Z}\}$. $\Sigma$ admits a factor map $\pi:\Sigma\rightarrow M$ with the following properties:
	 \begin{enumerate}
	 	\item $\sigma:\Sigma\rightarrow\Sigma$, $(\sigma \ul{u})_i:=u_{i+1}$, $i\in \mathbb{Z}$ (the left-shift); $\pi\circ\sigma=f\circ\pi$.
	 	\item $\pi$ is a H\"older continuous map w.r.t to the metric $d(\ul{u},\ul{v}):=\exp\left(-\min\{i\geq0:u_i\neq v_i\text{ or } u_{-i}\neq v_{-i}\}\right)$.
	 	\item $\Sigma^\#:=\left\{\ul{u}\in \Sigma:\exists n_k,m_k\uparrow\infty\text{ s.t. }u_{n_k}=u_{n_0}, u_{-m_k}=u_{-m_0},\forall k\geq0\right\}$, 
	 $\pi[\Sigma^\#]$ carries all $f$-invariant, $\chi$-hyperbolic probability measures.\footnote{I.e. hyperbolic measures with Lyapunov exponents greater than $\chi$ in absolute value.}
	 \end{enumerate}
\end{theorem}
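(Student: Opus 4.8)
The statement is the main theorem of \cite{SBO} (and, for $\dim M=2$, of \cite{Sarig}); the plan is to reproduce the structure of that construction, on which the later sections rely. \textbf{Step 1 (Pesin charts, coarse graining, the graph).} First I would attach to each $x\in\chi\mathrm{-summ}$ the Pesin chart $\psi_x:=\exp_x\circ C_\chi(x)$, with $C_\chi(x)$ the linear map of Theorem \ref{defOfC}; by Claim \ref{contraction} this is a diffeomorphism from a Euclidean ball, whose radius is a fixed power of $\epsilon$ times a negative power of $c_\chi(x)$, onto a neighbourhood of $x$, and in these coordinates $f$ looks uniformly hyperbolic up to an error controlled by $\epsilon$ and the $\beta$-Hölder moduli of $d_\cdot f^{\pm1}$. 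Next I would discretize: quantize $c_\chi(\cdot)$ on a geometric scale, quantize the chart over a countable dense family of points and linear maps, and record two ``window sizes'' $p^s,p^u$ governing the stable and unstable directions; the countable collection of these \emph{$\epsilon$-double Pesin charts} is the vertex set $\mathcal{V}$. The key lemma to prove here (``coarse graining'') is that the orbit of every $x$ in a suitable subset of $\chi\mathrm{-summ}$ — in particular, every Lyapunov regular $\chi$-hyperbolic point along which $c_\chi$ grows sub-exponentially — can be $\epsilon$-shadowed by some sequence $v_n\in\mathcal{V}$ with matching parameters; the hypothesis that a periodic $p\in\chi\mathrm{-hyp}$ exists is what guarantees $\mathcal{V}\neq\varnothing$. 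One then puts an edge $v\to w$ whenever $w$ is ``graph-transform admissible'' after $v$ (the overlap/compatibility condition of \cite{Sarig,SBO} forcing $f\psi_v$ and $\psi_w$ to match and the parameters to vary sub-exponentially); only finitely many $w$ qualify, so the induced two-sided TMS $\Sigma$ is locally finite, giving $\mathcal{G}=(\mathcal{V},\mathcal{E})$.

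\textbf{Step 2 (inverse theorem, H\"older continuity, Markov partition).} The Inverse (shadowing) Theorem — proved by iterating the stable and unstable graph transforms — shows each admissible $\underline{u}\in\Sigma$ shadows a unique orbit, whose ``centre'' is $\pi(\underline{u})$; then $\pi\circ\sigma=f\circ\pi$ is immediate, and H\"older continuity of $\pi$ follows from the exponential contraction of the graph transform in both time directions, which gives (1) and (2). Finally the sets $\pi([v])$ form a locally finite cover of $\pi[\Sigma]$, and I would run the Sinai--Bowen refinement, using the stable and unstable Pesin subrectangles of each chart, to upgrade it to a genuine Markov partition while keeping local finiteness; the $\Sigma$ and $\pi$ in the statement are the output of this last step.

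\textbf{Step 3 (the measure statement (3)).} Given an $f$-invariant $\chi$-hyperbolic probability measure $\mu$, assumed ergodic without loss of generality, I would invoke the Oseledec theorem: $\mu$-a.e.\ $x$ is Lyapunov regular with every exponent exceeding $\chi$ in absolute value, so $x\in\chi\mathrm{-hyp}\subseteq\chi\mathrm{-summ}$, and a Lusin/Egorov ``tempering'' argument (using $c_\chi<\infty$ a.e.) shows $c_\chi(f^n x)$ grows sub-exponentially along $\mu$-a.e.\ orbit, so the coarse-graining lemma of Step 1 applies and produces, for $\mu$-a.e.\ $x$, a chain $\underline{u}(x)\in\Sigma$ with $\pi(\underline{u}(x))=x$. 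Since $c_\chi<\infty$ $\mu$-a.e., some sublevel set $\{c_\chi\le R\}$ has positive measure and is shadowed by only finitely many $v\in\mathcal{V}$; by Birkhoff's ergodic theorem (applied to $f$ and to $f^{-1}$) $f^n x\in\{c_\chi\le R\}$ for infinitely many $n\ge0$ and infinitely many $n\le0$, so $u_n(x)$ takes some value infinitely often as $n\to+\infty$ and some (possibly different) value infinitely often as $n\to-\infty$, i.e.\ $\underline{u}(x)\in\Sigma^\#$. Hence $\mu$-a.e.\ $x$ lies in $\pi[\Sigma^\#]$, which proves (3).

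\textbf{Main obstacle.} I expect the hard part to be twofold. First, the coarse-graining lemma: proving that one can shadow \emph{every} relevant orbit while keeping $\mathcal{V}$ countable and $\mathcal{G}$ locally finite requires sharp control of how $\psi_x$ distorts $f$ in terms of $c_\chi(x)$ and $\beta$, and this is the most delicate estimate of the construction. Second, the upgrade from ``$\mu$-a.e.\ $x$ lifts to $\Sigma$'' to ``$\mu$-a.e.\ $x$ lifts to $\Sigma^\#$'', which is exactly where (weak) temperability enters, via the sub-exponential growth of $c_\chi$ along typical orbits; without it one controls only the image of $\Sigma$, not of $\Sigma^\#$. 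Both points are carried out in full in \cite{Sarig,SBO}.
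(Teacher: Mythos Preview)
The paper does not prove Theorem~\ref{mainSBO}; it is quoted from \cite[Theorem~3.13]{SBO} (and \cite[Theorem~4.16]{Sarig} for $d=2$), so there is no in-paper argument to compare against. Your outline does follow the architecture of those references, but two points need correction.

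In Step~2 you say that ``the $\Sigma$ and $\pi$ in the statement are the output of this last step'', meaning the Sinai--Bowen refinement. They are not. Theorem~\ref{mainSBO} is about the TMS over the graph $\mathcal{G}=(\mathcal{V},\mathcal{E})$ of double Pesin charts, with $\pi$ the factor map coming directly from the graph transform (shadowing). The Bowen--Sinai refinement appears only later in the paper (Definition~\ref{Doomsday}) to produce the partition $\mathcal{R}_\epsilon$ and the \emph{second} coding $\widehat{\Sigma}_\epsilon,\widehat{\pi}$. For the theorem as stated you should stop Step~2 once $\pi$ is defined and shown to be H\"older and equivariant; the refinement is not part of this statement.

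Your Step~3 has a genuine gap. A vertex $v=\psi_y^{p^s,p^u}\in\mathcal{V}$ records, besides an approximate center $y$ (and hence $c_\chi(y)$), the window parameters $p^s,p^u\in(0,Q_\epsilon(y)]$, and the discreteness of $\mathcal{V}$ (footnote~\ref{discreteness}) says only that $\{v:p^s\wedge p^u>\eta\}$ is finite for each $\eta>0$. Consequently a sublevel set $\{c_\chi\le R\}$ is \emph{not} shadowed by finitely many vertices: over each such center there are infinitely many admissible window sizes. Recurrence of $f^n(x)$ to $\{c_\chi\le R\}$ therefore does not force any symbol of $\underline{u}(x)$ to repeat. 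The correct argument controls the windows, not $c_\chi$: one builds the chain so that $p^s_n\wedge p^u_n\ge q_\epsilon(f^n(x))$ with $q_\epsilon$ the tempered kernel $q_\epsilon(x)=\epsilon\big/\sum_{n\in\mathbb{Z}}Q_\epsilon(f^n x)^{-1}e^{-|n|\epsilon/3}$ (see the remark after Definition~\ref{temperable}), uses sub-exponential growth of $c_\chi$ along Lyapunov-regular orbits to ensure $q_\epsilon>0$ a.e., and then applies Poincar\'e recurrence to the positive-measure set $\{q_\epsilon>t\}$; discreteness of $\mathcal{V}$ now \emph{does} leave only finitely many candidate symbols at the return times, and the pigeonhole principle gives $\underline{u}(x)\in\Sigma^\#$.
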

This theorem is the content of \cite[Theorem~3.13]{SBO} (and similarly, the content of \cite[Theorem~4.16]{Sarig} when $d=2$). $\mathcal{V}$ is a collection of double Pesin-charts (see 
Definition \ref{PesinCharts}), 
which is discrete.\footnote{\label{discreteness}Every $v\in\mathcal{V}$ is a double Pesin-chart of the form $v=\psi_x^{p^s,p^u}$ with $0<p^s,p^u\leq Q_\epsilon(x)$; and discreteness means that $\forall \eta>0:$ $\#\{v\in\mathcal{V}:v=\psi_x^{p^s,p^u} p^s\wedge p^u>\eta\}<\infty$.}

\begin{definition}\label{littleQ} Let $\epsilon>0$ and  $x\in\chi\mathrm{-summ}$, 

\medskip
    $$Q_\epsilon(x):=\max\{Q\in \{e^{\frac{-\ell\epsilon}{3}}\}_{\ell\in\mathbb{N}}:Q\leq \frac{1}{3^\frac{6}{\beta}}\epsilon^\frac{90}{\beta}\left(c_\chi(x)\right)^\frac{-48}{\beta}\}.$$
\end{definition}

\begin{definition}\label{PesinCharts} (Pesin-charts) Since $M$ is compact, $\exists r=r(M)>0$ s.t. the exponential map $\exp_x: \{v\in T_xM:|v|\leq r\}\rightarrow M$ is well defined and smooth. When $\epsilon\leq r$, the following is well defined since $C_\chi(\cdot)$ is a contraction (see Claim \ref{contraction}):
\begin{enumerate}
	\item $\psi_x^\eta:=\exp_x\circ C_\chi(x):\{v\in T_xM:|v|\leq \eta\}\rightarrow M$, $\eta\in (0,Q_\epsilon(x)]$, is called a {\em Pesin-chart}.
	\item A {\em double Pesin-chart} is an ordered couple $\psi_x^{p^s,p^u}:=(\psi_x^{p^s}, \psi_x^{p^u})$, where $\psi_x^{p^s}$ and $\psi_x^{p^u}$ are Pesin-charts. 
\end{enumerate} 
\end{definition}

\begin{definition}\label{temperable}
A point $x\in\chi\mathrm{-summ}$ is called {\em $\epsilon$-weakly temperable} if  $\exists q:\{f^n(x)\}_{n\in\mathbb{Z}}\rightarrow (0,\epsilon)\cap \{e^{\frac{-\ell\epsilon}{3}}\}_{l\in\mathbb{N}}$ s.t.
\begin{enumerate}
	\item $\frac{q\circ f}{q}=e^{\pm\epsilon}$,
	\item $\limsup\limits_{n\rightarrow\pm\infty}q(f^n(x))>0$,
	\item $\forall n\in\mathbb{Z}$, $q(f^n(x))\leq Q_\epsilon(f^n(x))$.
\end{enumerate}
The set of all $\epsilon$-weakly temperable points is denoted by {\em  $\epsilon$-w.t}.
\end{definition}

\ul{\textbf{Remark:}}\begin{enumerate}
	\item Notice that item (3) could have been replaced by ``$\exists a>0$ s.t. $\forall n\in \mathbb{Z}$, $q(f^n(x))\leq \frac{a}{\left(c_\chi(f^n(x))\right)^\frac{48}{\beta}}$", since $\forall\epsilon>0\exists a_\epsilon>0$ s.t. $Q_\epsilon(\cdot)=a_\epsilon^{\pm1}\frac{1}{\left(c_\chi(\cdot)\right)^\frac{48}{\beta}}$, and $q$ can be rescaled. The $\frac{48}{\beta}$ exponent is not an intrinsic property- every fixed, sufficiently large, power of $c_\chi(\cdot)$ in the definition of $Q_\epsilon(\cdot)$ would have sufficed; altering the power alters both the set of hyperbolic points, and the coding, via the $\epsilon$-overlap condition, \cite[Definition~2.18]{SBO}. 
	\item In \cite[Claim~2.11]{SBO}, we show that $\forall \epsilon>0$, almost every point is $\epsilon$-weakly temperable w.r.t. every invariant probability measure carried by $\chi\mathrm{-summ}$, (by the fact that for almost every $x\in\chi$-summ, $\lim\limits_{n\rightarrow\pm\infty}\frac{1}{n}\log\|C_\chi^{-1}(f^n(x))\|=0$, and the construction of a Pesin's tempered kernel $q_\epsilon(x):=\frac{\epsilon}{\sum_{n\in\mathbb{Z}}\frac{1}{Q_\epsilon(f^n(x))}e^{-\frac{|n|}{3}\epsilon}}$; and finally $\limsup\limits_{n\rightarrow\pm\infty}q_\epsilon(f^n(x))>0$ for almost every point by Poincar\'{e}'s recurrence theorem).
\end{enumerate}
\begin{definition}\label{NUHsharp}\text{}
%
 $\HWT_\chi^\epsilon:= \chi\mathrm{-summ}\cap \epsilon\mathrm{-w.t}$, are the {\em hyperbolic and weakly temperable points}, with parameters $\chi,\epsilon>0$.

\end{definition}

\textbf{\underline{Remark:}} $Q_\epsilon(\cdot)$ depends only on $\epsilon$ and the norm of $C_\chi^{-1}(\cdot)$ (a Lyapunov norm on the tangent space of a point), 
which is given by Claim \ref{inversenorm}, and depends only on the decomposition $T_\cdot M= H^s(\cdot)\oplus H^u(\cdot)$.
By equation \eqref{train}, if $x\in \chi$-summ is also $\epsilon$-weakly temperable (and $\epsilon$ is small w.r.t $\chi,\beta$, as imposed by the assumption $\epsilon\leq\epsilon_\chi$ from Theorem \ref{mainSBO}), then the decomposition $T_xM=H^s(x)\oplus H^u(x)$ must be unique. 
Therefore, $Q_\epsilon(\cdot)$ is defined canonically on $\epsilon$-w.t, and does not depend on the choice of $C_\chi(\cdot)$.
Thus, $\forall \epsilon\in (0,\epsilon_\chi]$, $\HWT_\chi^\epsilon$ is defined canonically.

\textbf{\underline{Remark:}}
$\HWT_\chi^\epsilon$ is of full measure w.r.t. every invariant probability measure carried by $\chi\mathrm{-summ}$. In \cite{SBO} (and in \cite{Sarig}), the authors construct a Markov partition and show that it covers a smaller set than $\HWT_\chi^\epsilon$ 
. Theorem \ref{BMS} shows that the same construction in \cite{SBO} can be done to cover all of $\HWT_\chi^\epsilon$ from Definition \ref{NUHsharp}, $\forall \epsilon\in (0,\epsilon_\chi]$.

\begin{claim}\label{canonicalepsilon}
	$\forall \epsilon>0$ and $\epsilon'\geq \frac{3}{2}\epsilon$, $\HWT_\chi^\epsilon\subseteq \HWT_\chi^{\epsilon'}$.
\end{claim}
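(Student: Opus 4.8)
The plan is to reduce the inclusion to a statement about weak temperability and then re-grid the temperability function. Since $\chi$-summ carries no dependence on $\epsilon$, the inclusion $\HWT_\chi^\epsilon\subseteq\HWT_\chi^{\epsilon'}$ is equivalent to: every $x\in\chi$-summ that is $\epsilon$-weakly temperable is $\epsilon'$-weakly temperable. So fix such an $x$ together with a function $q:\{f^n(x)\}_{n\in\mathbb Z}\to(0,\epsilon)\cap\{e^{-\ell\epsilon/3}\}_{\ell\in\mathbb N}$ as in Definition \ref{temperable}, and define a new function on the orbit of $x$ by rounding each value of $q$ \emph{down} to the coarser grid $\{e^{-\ell\epsilon'/3}\}_{\ell\in\mathbb N}$:
\[
q'(y):=\max\bigl\{Q\in\{e^{-\ell\epsilon'/3}\}_{\ell\in\mathbb N}:Q\le q(y)\bigr\}.
\]
The maximum exists because this set is non-empty ($e^{-\ell\epsilon'/3}\downarrow 0$) and discrete with upper bound $q(y)$, and by construction $e^{-\epsilon'/3}q(y)<q'(y)\le q(y)$; in particular $0<q'(y)\le q(y)<\epsilon<\epsilon'$, so $q'$ sends the orbit of $x$ into $(0,\epsilon')\cap\{e^{-\ell\epsilon'/3}\}_{\ell\in\mathbb N}$, as Definition \ref{temperable} requires at scale $\epsilon'$.

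Next I would verify the three items of Definition \ref{temperable} for $q'$. Item (3): the quantity $\frac{1}{3^{6/\beta}}\epsilon^{90/\beta}(c_\chi(z))^{-48/\beta}$ in Definition \ref{littleQ} is non-decreasing in $\epsilon$ and $\epsilon'>\epsilon$, so $q'(f^n(x))\le q(f^n(x))\le Q_\epsilon(f^n(x))\le\frac{1}{3^{6/\beta}}(\epsilon')^{90/\beta}(c_\chi(f^n(x)))^{-48/\beta}$, and since $q'(f^n(x))$ lies on the $\epsilon'$-grid, maximality of $Q_{\epsilon'}$ gives $q'(f^n(x))\le Q_{\epsilon'}(f^n(x))$ — here rounding down rather than to the nearest grid point is what makes this step immediate. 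Item (2): $q'(f^n(x))\ge e^{-\epsilon'/3}q(f^n(x))$, so $\limsup_{n\to\pm\infty}q'(f^n(x))\ge e^{-\epsilon'/3}\limsup_{n\to\pm\infty}q(f^n(x))>0$. Item (1): writing $q'(y)=e^{-\theta(y)}q(y)$ with $\theta(y)\in[0,\epsilon'/3)$ for $y$ in the orbit of $x$,
\[
\frac{q'(f(y))}{q'(y)}=e^{\theta(y)-\theta(f(y))}\cdot\frac{q(f(y))}{q(y)},
\]
where $\theta(y)-\theta(f(y))\in(-\epsilon'/3,\epsilon'/3)$ and $q(f(y))/q(y)=e^{\pm\epsilon}$; hence $q'\circ f/q'=e^{\pm(\epsilon+\epsilon'/3)}$, and $\epsilon+\epsilon'/3\le\epsilon'$ is exactly the hypothesis $\epsilon'\ge\tfrac32\epsilon$, so $q'\circ f/q'=e^{\pm\epsilon'}$.

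I do not expect a genuine obstacle — the argument is elementary once the re-gridding is arranged. The only points needing care are bookkeeping: one should round \emph{down} so that item (3) follows from $q'\le q$ and the monotonicity in $\epsilon$ of the bound defining $Q_\epsilon$, with no quantitative comparison between that bound and $Q_\epsilon$ itself; and the constant $\tfrac32$ in the statement is precisely the threshold at which the grid-rounding error $e^{\pm\epsilon'/3}$ appearing in the cocycle ratio is absorbed by the slack between $\epsilon$ and $\epsilon'$ in item (1), via $\epsilon+\epsilon'/3\le\epsilon'$.
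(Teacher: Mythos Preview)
Your proof is correct and follows the same re-gridding approach as the paper: round $q$ down to the $\epsilon'$-grid and verify the three conditions, with the hypothesis $\epsilon'\ge\tfrac32\epsilon$ entering exactly as the inequality $\epsilon+\epsilon'/3\le\epsilon'$ in item (1). Your verification of item (3) is in fact slightly cleaner than the paper's --- you use directly that $q'$ lies on the $\epsilon'$-grid and is bounded by the threshold $\tfrac{1}{3^{6/\beta}}(\epsilon')^{90/\beta}c_\chi^{-48/\beta}$, hence by maximality $q'\le Q_{\epsilon'}$, whereas the paper introduces an additional rescaling constant $b(\epsilon,\epsilon')$ which your argument shows is unnecessary.
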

\begin{proof}
Let $x\in \HWT_\chi^\epsilon$, and let $q:\{f^n(x)\}_{n\in\mathbb{Z}}\rightarrow (0,\epsilon)\cap\{e^{-\frac{\epsilon\ell}{3}}\}_{\ell\geq0}$ be given by the $\epsilon$-weak temperability of $x$.	Define $\widetilde{q}(f^n(x)):=\max\{t\in \{e^{-\frac{\epsilon'\ell}{3}}\}_{\ell\geq0} : t\leq q(f^n(x))\}$. It follows that $\frac{\widetilde{q}_r\circ f}{\widetilde{q}}=e^{\pm(\epsilon+\frac{\epsilon'}{3})}=e^{\pm(\frac{\epsilon'}{3}+ \frac{2\epsilon'}{3})}=e^{\pm\epsilon'}$. It follows from Definition \ref{littleQ} that $\exists \widetilde{b}(\epsilon,\epsilon')>0$ s.t. $\forall n\in\mathbb{Z}$, $\widetilde{b}(\epsilon,\epsilon')\cdot Q_\epsilon(f^n(x))\leq Q_{\epsilon'}(f^n(x))$. Let $b(\epsilon,\epsilon'):= \max\{t\in \{e^{-\frac{\epsilon'\ell}{3}}\}_{\ell\geq0} : t\leq \widetilde{b}(\epsilon,\epsilon')\}$, and define $q'(f^n(x)):=b(\epsilon,\epsilon')\cdot \widetilde{q}(f^n(x))$, $n\in\mathbb{Z}$. Since $\{e^{-\frac{\epsilon'\ell}{3}}\}_{\ell\geq0}$ is closed under multiplication, it follows that $q'$ satisfies the assumptions of $\epsilon'$-weak temperability for $x$, and so $x\in \HWT_\chi^{\epsilon'}$. 
\end{proof}

\section{Symbolic Dynamics}\label{symbolicdynamicspart}
We now present some changes to the construction of $\epsilon_\chi,\mathcal{V},\Sigma$, and in Theorem $\ref{BMS}$ we will show that this does not affect the statements of \cite{SBO}. On the other hand, these changes will allow us to construct the symbolic dynamics in such a way that we could characterize the image of $\Sigma^\#$ (for every $\epsilon>0$ small enough).

\medskip
Assume that there exists a $\chi$-hyperbolic periodic point $p$, so that $\exists \epsilon_\chi>0$ as in Theorem \ref{mainSBO}.
\begin{definition}\label{changesToCoding}\text{ }

 \begin{enumerate}
     \item $\epsista_\chi:= \min\{\epsilon_{\frac{\chi}{2}},\epsilon_\chi\}>0$.
     \item $\Vista_\epsilon$, $\epsilon\in(0,\epsista_\chi]$ is a collection of double Pesin-charts 
     in the set $\Aista_\epsilon$, where $\Aista_\epsilon$ is constructed in the Coarse Graining process for $\chi\mathrm{-summ}\cap\epsilon\mathrm{-w.t}$ (instead of Pesin-charts with centers in $NUH^*_\chi$, see \cite[Proposition~2.22]{SBO},\cite[Proposition~3.5]{Sarig}).
     \item A vertex $v\in \Vista_\epsilon$ is called {\em $\epsilon$-relevant}, if $\exists \ul{u}\in \Sista_\epsilon\cap[v]$ s.t. $\pi(\ul{u})\in \HWT_\chi^\epsilon$ (instead of the previous definition, \cite[Definition~3.14]{SBO}).\footnote{$\pi:\Sista_\epsilon\rightarrow M$ is defined by the Graph Transform (\cite[Proposition~3.12]{SBO},\cite[Proposition~4.12]{Sarig}), the same way as $\pi:\Sigma\rightarrow M$, and satisfies the properties from the statement of Theorem \ref{mainSBO} ($\forall i\in\mathbb{Z}$, $u_i=\psi_{x_i}^{p^s_i,p^u_i}$ satisfies equation \eqref{train}, while the assumptions for $\psi_{x_i}^{p^s_i\wedge p_i^u}\rightarrow \psi_{x_{i+1}}^{p^s_{i+1}\wedge p_{i+1}^u}$ remain unchanged).}
     \item $\Eista_\epsilon\subseteq \Vista_\epsilon\times\Vista _\epsilon $ is a set of edges, characterized by the same $\epsilon$-overlap condition with no change (see \cite[\textsection~3.0.2,Definition~2.23,Definition~2.18]{SBO}).
     \item $\Gista_\epsilon=(\Vista_\epsilon,\Eista_\epsilon)$ is a countable locally-finite directed graph (the local-finiteness of $\Gista_\epsilon$ follows from the discreteness of $\Vista_\epsilon$, see footnote \ref{discreteness})
.
     \item $\Sista_\epsilon:=\{\ul{u}\in \overset{\star}{\mathcal{V}}\mathmiddlescript{{\epsilon^{^\mathcal{Z}}}}\text{ s.t. }(u_i,u_{i+1})\in\Eista_\epsilon,\forall i\in\mathbb{Z}\}$ is the topological Markov shift induced by $\Gista_\epsilon$.
     \item $\Sistasharpepsilon:=\{\ul{u}\in\Sista_\epsilon:\exists n_k,m_k\uparrow\infty\text{ s.t. }u_{n_k}=u_{n_0}, u_{-m_k}=u_{-m_0},\forall k\geq0\}$.

 \end{enumerate}
\end{definition}

Without loss of generality, we dismiss all vertices in $\Vista_\epsilon$ which are not $\epsilon$-relevant, and assume that every $v\in \Vista_\epsilon$ is $\epsilon$-relevant.

\begin{lemma}\label{newHope}
 Let $\epsilon\in (0,\epsista_\chi]$, and let $\ul{u}\in\Sistasharpepsilon$ be an admissible chain of double Pesin-charts. 
 Then $p:=\pi(\underline{u})\in \chi\mathrm{-summ}$.
\end{lemma}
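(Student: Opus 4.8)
The plan is to show that the infinite-to-the-past-and-future recurrence in $\Sistasharpepsilon$ forces the image point $p=\pi(\underline u)$ to inherit, along its orbit, the summability bounds that define $\chi\mathrm{-summ}$. First I would recall from \cite{SBO,Sarig} the structure of an admissible chain $\underline u=(\psi_{x_i}^{p^s_i,p^u_i})_{i\in\mathbb Z}$: by the Graph Transform (\cite[Proposition~3.12]{SBO}), $\pi(\underline u)$ is the unique point whose orbit shadows the charts, and more precisely for each $i$ the point $f^i(p)$ lies in the image of $\psi_{x_i}$ with $\psi_{x_i}^{-1}(f^i(p))$ exponentially small in terms of the local parameters, and the chart maps $\psi_{x_i}$ are built from the Lyapunov change of coordinates $C_\chi(x_i)$ at points $x_i\in \chi\mathrm{-summ}\cap\epsilon\mathrm{-w.t}$. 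The key quantitative inputs are: (a) the overlap/edge condition controls $\|C_\chi^{-1}(x_{i+1})d_{x_i}f\,C_\chi(x_i) - (\text{block diagonal})\|$ and the ratios $c_\chi(x_{i+1})/c_\chi(x_i)$ up to factors $e^{\pm\epsilon}$; (b) along a recurrent chain, $u_{n_k}=u_{n_0}$ and $u_{-m_k}=u_{-m_0}$ means the charts — hence the values $c_\chi(x_{n_k})$ and $Q_\epsilon(x_{n_k})$ — return to fixed values infinitely often in both time directions.

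Next I would transport the problem into the Lyapunov coordinates. Define the splitting at $p$ by $H^s(p):=d_0\psi_{x_0}\big(C_\chi(x_0)[\mathbb R^{s}\times\{0\}]\big)$ pulled along to $p$ — more carefully, using the stable/unstable manifolds $V^s[\underline u],V^u[\underline u]$ produced by the construction at $\underline u$, set $H^s(p):=T_pV^s[\underline u]$ and $H^u(p):=T_pV^u[\underline u]$; these are transverse and span $T_pM$ because $\underline u\in\Sistasharpepsilon$ (the admissible manifolds are uniformly transverse, \cite{SBO}). Then for a unit vector $\xi_s\in H^s(p)$ I need to bound $\sum_{m\ge0}|d_pf^m\xi_s|^2e^{2\chi m}$. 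Writing $\xi_s^{(m)}:=d_pf^m\xi_s$ and pushing forward through the charts, $C_\chi^{-1}(f^m(p)\text{-chart})$ applied to $\xi_s^{(m)}$ contracts by the block $D_s$ at each step up to the $e^{\pm\epsilon}$ errors from the edge condition, giving $|\text{(coordinate of }\xi_s^{(m)})|\le e^{m(-\chi+C\epsilon)}$ times a slowly varying factor; converting back to the Riemannian norm costs a factor $\|C_\chi(x_m)\| \cdot$ (control of the chart) which is at worst a $c_\chi(x_m)$-type quantity but, crucially, the tempered function $q$ in the $\epsilon$-weak temperability of each $x_i$ keeps $Q_\epsilon(f^i(p))$ — equivalently $c_\chi(x_i)^{-48/\beta}$ — from decaying faster than $e^{-\epsilon|i|}$. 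Since $\epsilon\le\epsista_\chi$ is small relative to $\chi$, the product $e^{2m(-\chi+C\epsilon)}\cdot e^{C'\epsilon m}$ still has a summable geometric tail, so $S^2(p,\xi_s)<\infty$; the unstable direction is symmetric under $f\leftrightarrow f^{-1}$.

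The main obstacle, and where I would spend the most care, is making the "slowly varying factor" genuinely slowly varying — i.e. controlling the accumulated distortion $\prod_{j<m}(c_\chi(x_{j+1})/c_\chi(x_j))^{\pm 1}$ and the chart nonlinearity along the whole forward orbit. This is exactly the place where one normally invokes Lyapunov regularity to say $\tfrac1m\log c_\chi(x_m)\to 0$; here we do not have regularity, only $\epsilon$-weak temperability, so the bound is merely $c_\chi(x_m)=e^{\pm C\epsilon m}c_\chi(x_0)$ with a one-sided constant. The resolution is to absorb this $e^{C\epsilon m}$ growth into the strict contraction $e^{-\chi m}$ of the stable block — legitimate precisely because $\epsilon\le\epsista_\chi$ was chosen (in Theorem~\ref{mainSBO} and Definition~\ref{changesToCoding}(1)) so small that $-\chi+C\epsilon<-\tfrac\chi2<0$; the choice $\epsista_\chi=\min\{\epsilon_{\chi/2},\epsilon_\chi\}$ is what guarantees the construction still functions with this slack. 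Once the geometric series converges, one checks the $\sup$ over unit $\xi_s$ is attained/finite by compactness of the unit sphere in $H^s(p)$ and continuity of the (finite) sums, yielding $p\in\chi\mathrm{-summ}$.
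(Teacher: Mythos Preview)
Your direct chart-by-chart estimate has a genuine gap: it proves only $(\chi-O(\epsilon))$-summability, not $\chi$-summability. Pushing $\xi_s$ forward through the linearized charts and using the block form of $C_\chi^{-1}(x_{i+1})\,d_{x_i}f\,C_\chi(x_i)$ together with the $e^{\pm C\epsilon}$ distortion from the overlap condition and nonlinearity gives at best $|d_pf^m\xi_s|\le C_0\,e^{(-\chi+C\epsilon)m}$. When you insert this into the \emph{weighted} sum that defines $\chi\mathrm{-summ}$, the summand is $|d_pf^m\xi_s|^2e^{2\chi m}\le C_0^2\,e^{2C\epsilon m}$, a divergent geometric series. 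Your sentence ``the product $e^{2m(-\chi+C\epsilon)}\cdot e^{C'\epsilon m}$ still has a summable geometric tail'' omits the weight $e^{2\chi m}$; with it included the tail is not summable. The choice $\epsista_\chi=\min\{\epsilon_{\chi/2},\epsilon_\chi\}$ does not rescue this: it guarantees that the Graph Transform machinery is available for the Lyapunov changes of coordinates $C_r(\cdot)$ with $r\ge\chi/2$, but it does not remove the per-step $e^{C\epsilon}$ loss in your iteration.

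The paper circumvents this loss by a different mechanism which you do not invoke. First, it uses the $\epsilon$-\emph{relevance} of the vertex $u_0$ (Definition~\ref{changesToCoding}(3), assumed throughout): there is a chain $\ul{w}\in\Sista_\epsilon\cap[u_0]$ whose shadowed point $z=\pi(\ul{w})$ lies in $\HWT_\chi^\epsilon\subseteq\chi\mathrm{-summ}$. Via \cite[Lemma~4.5,~4.6,~Lemma~4.7~(Claim~2)]{SBO} one compares, for each $r'<\chi$, the $r'$-weighted sum at $p$ to the corresponding sum at the chart center $x_0$ with a \emph{bounded} multiplicative error $e^{\pm\sqrt\epsilon}$ independent of $m$ and of $r'$; and the $r'$-weighted sum at $x_0$ is at most $\|C_\chi^{-1}(x_0)\|^2$ by the monotonicity of $t\mapsto\|C_t^{-1}(x_0)\|$. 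This yields the uniform-in-$r'$ bound
\[
\sup_{|\xi_s|=1}\ \sum_{m\ge0}|d_pf^m\xi_s|^2e^{2r'm}\ \le\ \tfrac{5}{4}\|C_\chi^{-1}(x_0)\|^2\qquad\text{for all }r'<\chi,
\]
after which a short partial-sum/continuity argument (truncate at $N$, choose $r'\uparrow\chi$) upgrades the bound to $r'=\chi$. The essential difference from your scheme is that the comparison error is a single constant, not an accumulated $e^{C\epsilon m}$; obtaining that requires the relevance hypothesis and the SBO comparison lemmas rather than a raw iteration of the block-diagonal form.
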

\begin{proof}
Write $u_i=\psi_{x_i}^{p^s_i,p^u_i}$,$i\in\mathbb{Z}$. For each $i\in\mathbb{Z}$, $x_i\in\chi\mathrm{-summ}$, whence, in particular, $x_i\in r\mathrm{-hyp}$ $\forall r\in [\frac{\chi}{2},\chi)$. 
By the definition of $\epsista_\chi$, since $\epsilon\leq \epsista_\chi$, $Q_{\epsilon}(x_i)$ is small enough for the Graph Transform with the Lyapunov change of coordinate $C_r(\cdot)$, $\forall r\in[\frac{\chi}{2},\chi)$ (see \cite[Theorem~3.6,\textsection~2.1.2]{SBO}). We prove that $\sup_{\xi_s\in T_pV^s(\ul{u}),|\xi_s|=1}\sum_{m=0}^\infty|d_pf^m\xi_s|^2e^{2\chi m}<\infty$, the case for $H^u(p)$ is similar. W.l.o.g., $u_{n_k}=u_0$ for $n_k\uparrow\infty$.

Step 1: By the relevance of $u_0$, take some chain $\ul{w}\in \Sista_\epsilon\cap [u_0]$ s.t. $z:=\pi(\ul{w})\in \HWT_\chi^\epsilon=\chi\mathrm{-summ}\cap\epsilon$-w.t, whence in $r\mathrm{-hyp}$ $\forall r\in[\frac{3\chi}{5},\chi)$. In the proof of \cite[Lemma~4.5]{SBO}, temperability can be replaced by $\epsilon$-weak temperability (for $0<\epsilon\leq\epsista_\chi$), a more relaxed assumption. It follows then, that $\forall r\in[\frac{3\chi}{5},\chi)$ $\exists C=C(z,\frac{3\chi}{5},r)$, s.t. $\forall y\in V^s(\ul{w})$, $\sup_{\xi_s\in T_yV^s(\ul{w}),|\xi_s|=1}\sum_{m=0}^\infty|d_yf^m\xi_s|^2e^{2(r-\frac{\chi-r}{4}) m}\leq C<\infty$. Now, since $r-\frac{\chi-r}{4}\geq\frac{\chi}{2}$, we are free to use \cite[Lemma~4.6]{SBO} and claim 2 in \cite[Lemma~4.7]{SBO}, and get 
\begin{equation}\label{Lauranou}\sup_{\xi_s\in T_pV^s(\ul{u}),|\xi_s|=1}\sum_{m=0}^\infty|d_pf^m\xi_s|^2e^{2(r-\frac{\chi-r}{4}) m}\leq e^{\epsilon_\chi^\frac{1}{2}}\cdot\sup_{\xi_s\in H^s(x_0),|\xi_s|=1}\sum_{m=0}^\infty|d_{x_0}f^m\xi_s|^2e^{2(r-\frac{\chi-r}{4}) m}\leq \frac{5}{4}\|C_\chi^{-1}(x_0)\|^2.\end{equation}
The last inequality is true since $t\mapsto \|C_t^{-1}(x)\|$ is monotonous for every $x\in\chi\mathrm{-summ}$. Thus, $p\in r'\mathrm{-hyp}$ $\forall r'<\chi$.

Step 2: Fix $\xi_s\in T_pV^s(\ul{u}), |\xi_s|=1$. If $\sum_{m=0}^\infty|d_pf^m\xi_s|^2e^{2\chi m}\geq 2\|C_\chi^{-1}(x_0)\|^2$, then choose $N>0$ s.t. $\sum_{m=0}^N|d_pf^m\xi_s|^2e^{2\chi m}\geq \frac{3}{2}\|C_\chi^{-1}(x_0)\|^2$. Choose $r'<\chi$ s.t. $\sum_{m=0}^N|d_pf^m\xi_s|^2e^{2r' m}\geq \frac{4}{3}\|C_\chi^{-1}(x_0)\|^2$, whence $\sum_{m=0}^\infty|d_pf^m\xi_s|^2e^{2r' m}\geq \frac{4}{3}\|C_\chi^{-1}(x_0)\|^2$, a contradiction to equation \ref{Lauranou} from step 1! Therefore,
 \begin{equation}\label{finishhim}\forall \xi_s\in T_pV^s(\ul{u}), |\xi_s|=1,\sum_{m=0}^\infty|d_pf^m\xi_s|^2e^{2\chi m}\leq 2\|C_\chi^{-1}(x_0)\|^2,\end{equation}
 and similarly with $T_pV^u(\ul{u})$; and so $p\in \chi\mathrm{-summ}$. 
\end{proof}
\begin{theorem}\label{BMS}$\forall \epsilon\in(0,\epsista_\chi]$, $\pi[\Sistasharpepsilon]=\HWT_\chi^\epsilon$.
\end{theorem}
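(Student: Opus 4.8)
The plan is to prove the two inclusions $\pi[\Sistasharpepsilon]\subseteq\HWT_\chi^\epsilon$ and $\HWT_\chi^\epsilon\subseteq\pi[\Sistasharpepsilon]$ separately. The inclusion $\subseteq$ asserts that every point coded by a recurrent admissible chain is hyperbolic and weakly temperable; Lemma \ref{newHope} already delivers membership in $\chi\mathrm{-summ}$, so what remains is to produce, for $p=\pi(\ul{u})$ with $\ul{u}\in\Sistasharpepsilon$, a temperability function $q$ on the orbit of $p$ witnessing $p\in\epsilon\mathrm{-w.t}$. Writing $u_i=\psi_{x_i}^{p_i^s,p_i^u}$, I would set $q(f^i(p))$ to be a rounding to the grid $\{e^{-\ell\epsilon/3}\}$ of $p_i^s\wedge p_i^u$ (equivalently, of $Q_\epsilon$ along the chart centers, after the rescaling discussed in the first Remark following Definition \ref{NUHsharp}), and then verify the three defining properties of Definition \ref{temperable}. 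The multiplicative bound $q\circ f/q=e^{\pm\epsilon}$ follows from the $\epsilon$-overlap condition defining $\Eista_\epsilon$, which controls consecutive $p_i^s\wedge p_i^u$ up to $e^{\pm\epsilon}$, possibly after the same grid-rounding trick as in the proof of Claim \ref{canonicalepsilon}. The positivity of the $\limsup$ in both time directions is exactly where the hypothesis $\ul{u}\in\Sistasharpepsilon$ enters: the returns $u_{n_k}=u_{n_0}$ and $u_{-m_k}=u_{-m_0}$ force $q(f^{n_k}(p))$ and $q(f^{-m_k}(p))$ to take a fixed positive value infinitely often. The domination $q(f^i(p))\leq Q_\epsilon(f^i(p))$ requires the comparison between the Lyapunov norm $\|C_\chi^{-1}(x_i)\|=c_\chi(x_i)$ at the chart centers and $c_\chi(f^i(p))$ at the coded point, which is precisely the content of the estimates in Step 1 and Step 2 of the proof of Lemma \ref{newHope} via \cite[Lemma~4.6, Lemma~4.7]{SBO}; together with $p_i^s\wedge p_i^u\leq Q_\epsilon(x_i)$ this yields $q(f^i(p))\lesssim Q_\epsilon(f^i(p))$, and an overall rescaling absorbs the constant.

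For the reverse inclusion $\HWT_\chi^\epsilon\subseteq\pi[\Sistasharpepsilon]$, I would run the standard coding argument, using that the coarse graining here is performed over $\chi\mathrm{-summ}\cap\epsilon\mathrm{-w.t}$ (item (2) of Definition \ref{changesToCoding}) rather than over the Lyapunov regular temperable points of \cite{SBO}. Given $x\in\HWT_\chi^\epsilon$ with temperability function $q$, the pair $(q,x)$ selects at each time $n$ a double Pesin chart $\psi_{x_n}^{p_n^s,p_n^u}\in\Vista_\epsilon$ with $p_n^s\wedge p_n^u\asymp q(f^n(x))$ that $\epsilon$-shadows $f^n(x)$; the three properties of $q$ guarantee this is an admissible chain $\ul{u}\in\Sista_\epsilon$, and the shadowing/uniqueness part of the construction (the Graph Transform together with \cite[Proposition~3.12]{SBO}) gives $\pi(\ul{u})=x$. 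To upgrade $\ul{u}$ to an element of $\Sistasharpepsilon$ I would use item (2) of Definition \ref{temperable}: along a subsequence $n_k\to+\infty$ one has $q(f^{n_k}(x))\geq\delta>0$, hence $p_{n_k}^s\wedge p_{n_k}^u\geq\delta'>0$ for all $k$; by the discreteness of $\Vista_\epsilon$ (footnote \ref{discreteness}) only finitely many vertices meet this lower bound, so pigeonhole produces a vertex $v$ occurring at $u_{n_{k_j}}$ for infinitely many $j$, and the identical argument for $n_k\to-\infty$ produces a backward-recurrent vertex; the two together place $\ul{u}$ in $\Sistasharpepsilon$.

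I expect the main obstacle to be the domination estimate $q\leq Q_\epsilon$ at the coded point in the $\subseteq$ direction, that is, transporting the hyperbolicity bounds from the chart centers $x_i$ to $p=\pi(\ul{u})$ uniformly in $i$. The delicate point is that $Q_\epsilon(f^i(p))$ is defined through $c_\chi(f^i(p))$, governed by the canonical decomposition $T_{f^i(p)}M=H^s(f^i(p))\oplus H^u(f^i(p))$ furnished by Lemma \ref{newHope}, so one must check that this decomposition coincides with $T_{f^i(p)}V^s(\sigma^i\ul{u})\oplus T_{f^i(p)}V^u(\sigma^i\ul{u})$ (using uniqueness of the splitting as in the Remark after Definition \ref{NUHsharp}, which relies on \eqref{train}) and that the analogue of \eqref{finishhim} at time $i$ has a right-hand side controlled by $c_\chi(x_i)$ with a constant independent of $i$. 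This index-uniformity is what makes the resulting $q$ an honest temperability function rather than one with an unbounded comparison constant; everything else — the overlap-condition multiplicativity, the pigeonhole for recurrence, and the shadowing identity $\pi(\ul{u})=x$ — is routine given the cited results from \cite{SBO,Sarig}.
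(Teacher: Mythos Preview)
Your proposal is correct and follows essentially the same route as the paper: for $\subseteq$ you define $q(f^i(p))$ as a fixed grid-scaled multiple of $p_i^s\wedge p_i^u$ and verify Definition~\ref{temperable} via the $\epsilon$-overlap condition, the recurrence hypothesis, and the comparison $\|C_\chi^{-1}(f^i(p))\|=e^{\pm 5\sqrt\epsilon}\|C_\chi^{-1}(x_i)\|$ obtained (exactly as you anticipate) from Lemma~\ref{newHope} together with \cite[Lemma~4.6, Lemma~4.7, Proposition~4.8]{SBO}; for $\supseteq$ you run the coarse-graining/shadowing argument over $\HWT_\chi^\epsilon$ and pigeonhole on the discreteness of $\Vista_\epsilon$, which is precisely the paper's Step~3. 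The only cosmetic difference is that the paper multiplies $p_i^s\wedge p_i^u$ by the explicit constant $b_\epsilon=\max\{t\in\{e^{-\ell\epsilon/3}\}:t\leq e^{-300\sqrt\epsilon/\beta}\}$ rather than rounding, since $p_i^s\wedge p_i^u$ already lies on the grid; your ``rescaling absorbs the constant'' is the same move.
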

In \cite[Definition~2.17]{SBO} (and similarly in \cite[\textsection~2.5]{Sarig} when $d=2$) the authors offer a set which is covered by the constructed Markov partition $\mathcal{R}$, and denote this set by $NUH_\chi^\#$. 
The definition of $NUH_\chi^\#$ involves Lyapunov regularity, and we claim that this was unnecessary, when wishing to work with conservative (perhaps infinite) measures. 
In Definition \ref{NUHsharp} we introduce a definition for a more inclusive set $\HWT_\chi^\epsilon$, which does not depend on Lyapunov regularity. We show in the proof below that $\HWT_\chi^\epsilon$ still admits the property of being coded by a Markov partition; but in fact it carries additional natural properties which we will see later (e.g. Corollary \ref{cafeashter}, Proposition \ref{homoclinicirreducible}). 
\begin{proof}
Let $\epsilon\in(0,\epsista_\chi]$. The following steps are done both in \cite{SBO}, and in \cite{Sarig} when $d=2$. We follow each step and give references to \cite{SBO} for the case $d\geq2$, although these references are analogues to the preceding work in \cite{Sarig}.

Step 1: In \cite[Definition~2.10]{SBO}, the authors introduce $NUH_\chi^*$, a set of Lyapunov regular $\chi$-hyperbolic points, such that $\forall x\in NUH_\chi^* ,\lim_{n\rightarrow\pm\infty}\frac{1}{n}\log\|C_\chi^{-1}(f^n(x))\|=0$ (\cite[Claim~2.11]{SBO}). This allows us to define Pesin's tempered kernel $\forall x\in NUH_\chi^*$, $q_\epsilon(x):=\frac{\epsilon}{\sum_{n\in\mathbb{Z}}\frac{1}{Q_\epsilon(f^n(x))}e^{-\frac{|n|}{3}\epsilon}}$, and so allows to define $NUH_\chi^\#:=\{x\in NUH_\chi^*: \limsup_{n\rightarrow\pm\infty}q_\epsilon(f^n(x))>0\}$ as well (\cite[Definition~2.17]{SBO}).

Step 2: By \cite[Proposition~2.22]{SBO}, there exists a discrete and sufficient collection of Pesin-charts with centers in $NUH_\chi^*$, $\mathcal{A}$; as in Definition \ref{changesToCoding}, $\Aista_\epsilon$ is constructed similarly, over a collection of Pesin-charts with centers in $\HWT_\chi^\epsilon$. 

Step 3: $\HWT_\chi^\epsilon\subseteq \epsilon$-w.t, and so an analogue to Pesin's tempered kernel exists by definition $\forall x\in \HWT_\chi^\epsilon$. Consequently, as in \cite[Proposition~2.30]{SBO}, $\pi[\Sista_\epsilon]\supseteq \HWT_\chi^\epsilon$. In addition, by \cite[Theorem~3.13(3), Proposition~3.16]{SBO}, $\pi[\Sistasharpepsilon]\supseteq \HWT_\chi^\epsilon$.

Step 4: Let $\ul{u}\in\Sistasharpepsilon$, and write $z:=\pi(\ul{u})$. W.l.o.g. write $u_{n_k}=u_0$, $\forall k\geq0$, and $n_k\uparrow\infty$. Consider the finite periodic word $\ul{w}=(u_0,u_1,...,u_{n_1-1},u_0)$, and associate it with its periodic extension to a chain. Consider the chains $\ul{u}^{(l)}\in \Sistasharpepsilon$, $l\geq0$, where $u^{(l)}_i = \Big\{\begin{array}{lr}
        u_i, & \text{for } i\leq n_l\\
        w_{i-n_l}, & \text{for } i\geq n_l\\
        \end{array}$. Write $z_l:=\pi(\ul{u}^{(l)})$. In \cite[Lemma~4.7]{SBO}, the author uses \cite[Lemma~4.5]{SBO} in order to show that $\sup_{\l \geq0}\sup_{\xi\in T_{z_l}V^s(\ul{u}^{(l)}),|\xi|=1}S(z_l,\xi)<\infty$; equation \eqref{finishhim} in Lemma \ref{newHope} takes the place of \cite[Lemma~4.5]{SBO} in this argument. It follows that \cite[Lemma~4.6,Lemma~4.7(Claim 2)]{SBO} can be carried out verbatim, and so $\exists$ a linear invertible map $\pi_{x_0}^s:T_zV^s(\ul{u})\rightarrow H^s(x_0)$ s.t. $\|\pi_{x_0}^s\|, \|\left(\pi_{x_0}^s\right)^{-1}\|\leq e^{2Q_\epsilon(x_0)^{\frac{\beta}{4}}}$, and
        \begin{align*}
				\forall \xi\in T_{z}V^s(\ul{u}),|\xi|=1,S(z,\xi)=e^{\pm\sqrt\epsilon}S(x_0,\pi_{x_0}^s\xi).        
        \end{align*}
A similar statement holds for $\pi_{x_0}^u:T_zV^u(\ul{u})\rightarrow H^u(x_0)$. $\pi_{x_0}^s$ and $\pi_{x_0}^u$ extend to the invertible linear map $\pi_{x_0}:T_zM\rightarrow T_{x_0}M$ by $\pi_{x_0}|_{T_zV^s(\ul{u})}=\pi_{x_0}^s$ and $\pi_{x_0}|_{T_zV^u(\ul{u})}=\pi_{x_0}^u$. In particular, $z\in \chi$-summ. It then follows from the proof of \cite[Proposition~4.8]{SBO} (though not specified) that $\frac{\|C_\chi^{-1}(z)\|}{\|C_\chi^{-1}(x_0)\|}=e^{\pm(4\sqrt\epsilon+\epsilon)}= e^{\pm5\sqrt\epsilon}$.

Step 5: By Lemma \ref{newHope}, $\forall \ul{u}\in \Sistasharpepsilon$, $\pi(\ul{u})\in \chi$-summ. In addition, $q(f^n(\pi(\ul{u}))):=b_\epsilon\cdot p^s_n\wedge p^u_n$, where $b_\epsilon:=\max\{t\in\{e^{-\frac{\ell\epsilon}{3}}\}_{\ell\geq0}:t\leq e^{-\frac{300\sqrt\epsilon}{\beta}}\}$ and $u_n=\psi_{x_n}^{p^s_n,p^u_n}$. By the definition of $\Aista_\epsilon$ (and so $\Vista_\epsilon$), $p_n^s\wedge p^u_n\in \{e^{-\frac{\ell\epsilon}{3}}\}_{\ell\geq0}$, $\forall n\in\mathbb{Z}$; thus, since $\{e^{-\frac{\ell\epsilon}{3}}\}_{\ell\geq0}$ is closed under multiplication, $q:\{f^n(\pi(\ul{u}))\}_{n\in\mathbb{Z}}\rightarrow (0,\epsilon)\cap \{e^{-\frac{\ell\epsilon}{3}}\}_{\ell\geq0}$. $q$ satisfies the assumptions of $\epsilon$-weak temperability: 
\begin{enumerate}
\item By the $\epsilon$-overlap condition, \cite[Definition~2.18]{SBO}, $\frac{q\circ f}{q}=e^{\pm\epsilon}$.
\item Since $\ul{u}\in\Sistasharpepsilon$, $\limsup\limits_{n\rightarrow\pm\infty} q\circ f^n(x)>0$.
\item  By the definition of double Pesin-charts, 
  $q(f^i(\pi(\ul{u})))\leq Q_\epsilon(x_i)$, $\forall i\in\mathbb{Z}$. Thus, $\forall n\in\mathbb{Z}$,

 by step 4 (recall Definition \ref{littleQ}, for definition of $Q_\epsilon$),
\begin{align*}p_n^s\wedge p_n^u\leq& Q_\epsilon(x_n)\leq \frac{\epsilon^\frac{90}{\beta}}{3^\frac{6}{\beta}}\|C_\chi^{-1}(x_n)\|^\frac{-48}{\beta} \frac{\epsilon^\frac{90}{\beta}}{3^\frac{6}{\beta}}\left(e^{5\sqrt\epsilon}\|C_\chi^{-1}(f^n(\pi(\ul{u})))\|\right)^\frac{-48}{\beta}\\
\leq &e^{5\sqrt\epsilon\cdot\frac{48}{\beta}}\cdot Q_\epsilon(f^n(\pi(\ul{u})))\cdot e^{\frac{\epsilon}{3}}\leq e^{\frac{300\sqrt\epsilon}{\beta}}\cdot Q_\epsilon(f^n(\pi(\ul{u}))).	
\end{align*}
Therefore $q(f^n(\pi(\ul{u})))\leq Q_\epsilon(f^n(\pi(\ul{u})))$.
\end{enumerate}
It follows that $\pi(\ul{u})\in\chi\mathrm{-summ}\cap\epsilon\mathrm{-w.t}=\HWT_\chi^\epsilon$. Thus, $\pi[\Sistasharpepsilon]\subseteq \HWT_\chi^\epsilon$, and together with step 3, $\pi[\Sistasharpepsilon]=\HWT_\chi^\epsilon$.

\end{proof}

\begin{cor}\label{localfinito}
 $\forall \epsilon\in (0,\epsista_\chi]$, $\forall u\in \Vista_\epsilon$, $\#\{v\in \Vista_\epsilon: \Zista_\epsilon(u)\cap\Zista_\epsilon(v)\neq\varnothing\}<\infty$, where $\Zista_\epsilon(v):=\pi[\Sistasharpepsilon\cap[v]]$.
\end{cor}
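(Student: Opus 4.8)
The plan is to follow the proof of the local‑finiteness of the Markov cover in \cite{SBO} (and in \cite{Sarig} when $d=2$), checking that the modified construction of Section~\ref{symbolicdynamicspart} supplies its hypotheses. Two inputs are needed. First, every $\Zista_\epsilon(v)$ consists of points of $\chi\mathrm{-summ}$ along whose codings the Lyapunov norm is well controlled: this is Lemma \ref{newHope} together with step~4 of the proof of Theorem \ref{BMS}, i.e. $\|C_\chi^{-1}(\pi(\ul a))\|=e^{\pm 5\sqrt\epsilon}\|C_\chi^{-1}(x_0)\|$ whenever $\ul a\in\Sistasharpepsilon$ and $a_0=\psi_{x_0}^{p_0^s,p_0^u}$. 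Second, the comparison of zeroth charts: if $\ul a,\ul b\in\Sistasharpepsilon$ with $\pi(\ul a)=\pi(\ul b)=z$, $a_0=\psi_{x_0}^{p_0^s,p_0^u}$ and $b_0=\psi_{y_0}^{q_0^s,q_0^u}$, then $p_0^s\wedge p_0^u=e^{\pm C\sqrt\epsilon}(q_0^s\wedge q_0^u)$ and the distance between the centres $x_0,y_0$ is at most $C'(p_0^s\wedge p_0^u)$, for constants $C,C'$ depending only on $f,\chi,\epsilon$. This second fact is the content of the corresponding comparison step in \cite{SBO}; the only change required is that temperability of the coded point is replaced by $\epsilon$-weak temperability (valid since $\epsilon\le\epsista_\chi$), precisely as was already carried out in Lemma \ref{newHope} and in steps~4--5 of the proof of Theorem \ref{BMS} --- these are the places where \cite[Lemmas~4.5--4.8]{SBO} were invoked, and equation \eqref{finishhim} takes over the role of the temperability hypothesis.

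Granting these, the corollary is immediate. Fix $u=\psi_{x_0}^{p_0^s,p_0^u}\in\Vista_\epsilon$ and suppose $v=\psi_{y_0}^{q_0^s,q_0^u}\in\Vista_\epsilon$ satisfies $\Zista_\epsilon(u)\cap\Zista_\epsilon(v)\neq\varnothing$. Choose $z$ in this intersection and, since $\Zista_\epsilon(w)=\pi[\Sistasharpepsilon\cap[w]]$, chains $\ul a\in\Sistasharpepsilon\cap[u]$ and $\ul b\in\Sistasharpepsilon\cap[v]$ with $\pi(\ul a)=\pi(\ul b)=z$. Applying the comparison to these two codings of $z$ gives $q_0^s\wedge q_0^u\geq e^{-C\sqrt\epsilon}(p_0^s\wedge p_0^u)=:\eta(u)>0$, a quantity depending only on $u$. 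By the discreteness of $\Vista_\epsilon$ (footnote~\ref{discreteness}), $\#\{v\in\Vista_\epsilon: v=\psi_y^{q^s,q^u},\ q^s\wedge q^u>\eta(u)\}<\infty$, and therefore $\#\{v\in\Vista_\epsilon:\Zista_\epsilon(u)\cap\Zista_\epsilon(v)\neq\varnothing\}<\infty$. (The distance bound between the centres may be used to confine them as well, but it is not needed once the parameters are bounded below.)

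I expect the one genuinely non-routine point to be the comparison in the first input: that any two chains of $\Sistasharpepsilon$ coding the same point have $\epsilon$-comparable zeroth charts. In \cite{SBO} this comes out of the machinery relating a chart to the orbit it shadows, whose outputs are estimates on $S(z,\cdot),U(z,\cdot)$ and on $\|C_\chi^{-1}\|$ along the chain, together with the resulting pinning of $p_0^s\wedge p_0^u$ to $Q_\epsilon$ evaluated along the orbit of $z$ --- equivalently, to the canonical $\epsilon$-weak-temperability value of $z$. One has to check that these conclusions persist when temperability is relaxed to $\epsilon$-weak temperability, using the two-sided recurrence built into $\Sistasharpepsilon$ and the bound $\epsilon\le\epsista_\chi$. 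Once this is verified the combinatorial step above, powered only by the discreteness of $\Vista_\epsilon$, closes the argument.
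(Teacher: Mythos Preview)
Your proposal is correct and follows essentially the same route as the paper: both reduce the corollary to the local-finiteness theorem of \cite[Theorem~5.2]{SBO} (respectively \cite[Theorem~10.2]{Sarig}), observing that the only modification needed is that Lemma~\ref{newHope} (equation~\eqref{finishhim}) replaces \cite[Lemma~4.5]{SBO} inside the Inverse Problem, exactly as in step~4 of Theorem~\ref{BMS}. Your write-up spells out the mechanism (comparison of zeroth-chart parameters for two codings of the same point, then discreteness of $\Vista_\epsilon$) that the paper leaves implicit in its citation, but the underlying argument is the same.
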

This is the content of \cite[Theorem~5.2]{SBO} (and similarly \cite[Theorem~10.2]{Sarig} when $d=2$), where (as in step 4 in Theorem \ref{BMS}) Lemma \ref{newHope} replaces \cite[Lemma~4.5]{SBO} in the proof \cite[Lemma~4.7]{SBO}; and the rest of the Inverse Problem (\cite[\textsection~4]{SBO}) can be carried out verbatim with $\Sista_\epsilon$ replacing $\Sigma$.

\medskip
Assume that there exists a periodic point $p\in\chi$-hyp, and let $\epsilon\in(0,\epsista_\chi]$. Let $\epsilon\in(0,\epsista_\chi]$.
\begin{definition}\label{Doomsday}
\text{ }

\medskip
\begin{enumerate}
	\item $\forall u\in \Vista_\epsilon$, $\Zista_\epsilon(u)=\pi[[u]\cap\Sistasharpepsilon]$, $\mathcal{Z}_\epsilon:=\{\Zista_\epsilon(u):u\in\Vista_\epsilon\}$.
	\item $\mathcal{R}_\epsilon$ is a countable partition of $\bigcup\limits_{v\in\Vista_\epsilon}\Zista_\epsilon(v)=\pi[\Sistasharpepsilon]$, s.t.
	\begin{enumerate}
		\item $\mathcal{R}_\epsilon$ is a refinement of $\mathcal{Z}_\epsilon$: $\forall Z\in\mathcal{Z}_\epsilon,R\in\mathcal{R}_\epsilon$, $R\cap Z\neq\varnothing\Rightarrow R\subseteq Z$.
		\item $\forall v\in\Vista_\epsilon$, $\#\{R\in\mathcal{R}_\epsilon:R_\epsilon\subseteq \Zista_\epsilon(v)\}<\infty$ (\cite[\textsection~11]{Sarig}).
		\item The Markov property: $\forall R\in\mathcal{R}_\epsilon$,$\forall x,y\in R$ $\exists ! z:=[x,y]_R\in R$, s.t. $\forall i\geq0, R(f^i(z))= R(f^i(y)), R(f^{-i}(z))= R(f^{-i}(x))$, where $R(t)$ is the unique partition member of $\mathcal{R}_\epsilon$ containing $t$, for $t\in\pi[\Sistasharpepsilon]$.
	\end{enumerate}
	\item $\forall R,S\in\mathcal{R}_\epsilon$, we say $R\rightarrow S$ if $R\cap f^{-1}[S]\neq\varnothing$, i.e. $\widehat{\mathcal{E}}_\epsilon=\{(R,S)\in\mathcal{R}_\epsilon^2\text{ s.t. }f^{-1}[S]\cap R\neq\varnothing\}$.
	\item $\Sig_\epsilon:=\{\ul{R}\in\mathcal{R}_\epsilon^{\mathbb{Z}}: R_i\rightarrow R_{i+1},\forall i\in\mathbb{Z}\}$.
\end{enumerate}	
\end{definition}
Given $\mathcal{Z}_\epsilon$, such a refining partition as $\mathcal{R}$ exists by the Bowen-Sinai refinement, see \cite[\textsection~11.1]{Sarig}.
\begin{definition}\label{sigmasharp}\text{ }

\medskip
\begin{enumerate}
    \item $\Sig^\#_\epsilon:=\{\ul{R}\in\Sig_\epsilon:\exists n_k,m_k\uparrow\infty\text{ s.t. }R_{n_k}=R_{n_0},R_{-m_k}=R_{-m_0},\forall k\geq0\}$.
    \item Every two partition members $R,S\in \mathcal{R}_\epsilon$ 
are said to be {\em $\epsilon$-affiliated} if $\exists u,v\in\Vista_\epsilon$ s.t. $R\subseteq \Zista_\epsilon(u), S\subseteq \Zista_\epsilon(v)$ and $\Zista_\epsilon(u)\cap \Zista_\epsilon(v)\neq\varnothing$ (this is due to O. Sarig, \cite[\textsection~12.3]{Sarig}).
\end{enumerate}
\end{definition}
\textbf{\underline{Remark:}} 
By Corollary \ref{localfinito} and Definition \ref{Doomsday}(2)(b), it follows that every partition member of $\mathcal{R}_\epsilon$ has only a finite number of partition members $\epsilon$-affiliated to it. 
\begin{theorem}
	Given $\Sig_\epsilon$ from Definition \ref{Doomsday}, there exists a factor map $\widehat{\pi}:\Sig_\epsilon\rightarrow M$ s.t.
	\begin{enumerate}
		\item $\widehat{\pi}$ is H\"older continuous w.r.t the metric $d(\ul{R},\ul{S})=\exp\left(-\min\{i\geq0: R_i\neq S_i\text{ or }R_{-i}\neq S_{-i}\}\right)$.
		\item $f\circ\widehat{\pi}=\widehat{\pi}\circ\sigma$, where $\sigma$ denotes the left-shift on $\Sig_\epsilon$.
		\item $\widehat{\pi}|_{\Sig_\epsilon^\#}$ is finite-to-one.
		\item $\forall \ul{R}\in \Sig_\epsilon$, $\widehat{\pi}(\ul{R})\in \overline{R_0}$.
		\item $\widehat{\pi}[\Sig_\epsilon^\#]$ carries all $\chi$-hyperbolic invariant probability measures.
	\end{enumerate}
\end{theorem}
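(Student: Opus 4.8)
The plan is to repeat the Bowen--Sinai construction of \cite[\S 11--12]{Sarig} (extended to $d\ge 2$ in \cite[\S 5]{SBO}) almost verbatim, checking only that every place where that construction invokes Lyapunov regularity or the planar dimension can be replaced by inputs already established above: $\chi$-summability of every coded point (Lemma \ref{newHope}), the identification $\pi[\Sistasharpepsilon]=\HWT_\chi^\epsilon$ (Theorem \ref{BMS}), local finiteness of the cover $\mathcal{Z}_\epsilon$ (Corollary \ref{localfinito}), and hence finiteness of $\epsilon$-affiliation on $\mathcal{R}_\epsilon$ (the Remark after Definition \ref{sigmasharp}).

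First I would construct $\widehat\pi$ and dispatch (1), (2), (4). Given $\underline R\in\Sig_\epsilon$, choose for each $i$ a vertex $v_i\in\Vista_\epsilon$ with $R_i\subseteq\Zista_\epsilon(v_i)$; following \cite[\S 12.1]{Sarig}, the edge relation $R_i\to R_{i+1}$ together with the Markov property (Definition \ref{Doomsday}(2)(c)) shows that $\bigcap_{|i|\le N}f^{-i}[\overline{R_i}]$ is nonempty for each $N$, and these sets are compact, nested, with diameters decaying geometrically in $N$ --- the decay being inherited from the contraction of cylinders along admissible chains of $\Sista_\epsilon$, the same mechanism behind the H\"older continuity of $\pi$ in Theorem \ref{mainSBO}(2). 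Hence $\bigcap_{i\in\mathbb Z}f^{-i}[\overline{R_i}]$ is a single point, which I define to be $\widehat\pi(\underline R)$; then (4) is immediate, (2) follows from uniqueness, and (1) follows because if $\underline R,\underline S$ agree on $|i|\le N$ then $\widehat\pi(\underline R),\widehat\pi(\underline S)$ both lie in $\bigcap_{|i|\le N}f^{-i}[\overline{R_i}]$, a set of diameter $\le C\theta^N$ for some fixed $\theta\in(0,1)$.

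The hard part will be (3), finite-to-one on $\Sig_\epsilon^\#$, and here I would follow \cite[\S 12.3]{Sarig}. Its two ingredients are: (i) if $\widehat\pi(\underline R)=\widehat\pi(\underline S)$ with $\underline R,\underline S\in\Sig_\epsilon^\#$, then $R_n$ and $S_n$ are $\epsilon$-affiliated for every $n$ --- because the common point's $f^n$-image lies both in a $Z\in\mathcal{Z}_\epsilon$ containing $R_n$ and in one containing $S_n$, so those two $Z$'s meet; and (ii) each $R\in\mathcal{R}_\epsilon$ has only finitely many $\epsilon$-affiliates, which is precisely the Remark after Definition \ref{sigmasharp}, built on Corollary \ref{localfinito} and Definition \ref{Doomsday}(2)(b). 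Given (i)--(ii), Sarig's combinatorial argument goes through unchanged: passing to a symbol that recurs infinitely often in $\underline R$ in both time directions (available since $\underline R\in\Sig_\epsilon^\#$), one uses local finiteness of the affiliation graph together with a pigeonhole on first-return loops to bound $\#(\widehat\pi^{-1}(\widehat\pi(\underline R))\cap\Sig_\epsilon^\#)$ uniformly. I expect verifying (i)--(ii) in the present non-regular, higher-dimensional setting to be the only genuine checking required; nothing else in the argument is dimension- or regularity-sensitive.

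Finally, for (5), let $\mu$ be a $\chi$-hyperbolic invariant probability measure; by ergodic decomposition I may assume $\mu$ ergodic. By the Remark after Definition \ref{NUHsharp}, $\mu(\HWT_\chi^\epsilon)=1$, and by Theorem \ref{BMS}, $\HWT_\chi^\epsilon=\pi[\Sistasharpepsilon]=\bigcup_{v\in\Vista_\epsilon}\Zista_\epsilon(v)=\bigcup\mathcal{R}_\epsilon$. For $\mu$-a.e.\ $x$ the natural code $\underline R(x):=(R(f^i(x)))_{i\in\mathbb Z}$ is well defined, lies in $\Sig_\epsilon$, and satisfies $\widehat\pi(\underline R(x))=x$, since $f^i(x)\in R(f^i(x))$ forces $x\in\bigcap_i f^{-i}[\overline{R(f^i(x))}]$. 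Writing $x=\pi(\underline u)$ with $\underline u\in\Sistasharpepsilon$, some vertex $v$ occurs infinitely often in $\underline u$ in both directions, so $f^{n_k}(x)\in\Zista_\epsilon(v)$ along sequences $n_k\to+\infty$ and along sequences $n_k\to-\infty$; as only finitely many members of $\mathcal{R}_\epsilon$ lie in $\Zista_\epsilon(v)$ (Definition \ref{Doomsday}(2)(b)), the pigeonhole principle yields one partition member recurring infinitely often forward and one backward, i.e.\ $\underline R(x)\in\Sig_\epsilon^\#$. Hence $\widehat\pi[\Sig_\epsilon^\#]$ contains a set of full $\mu$-measure, which proves (5).
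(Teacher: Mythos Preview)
Your proposal is correct and follows exactly the approach the paper intends: the paper's own proof is simply a one-line citation to \cite[Theorem~1.1]{SBO} (and \cite[Theorem~1.3]{Sarig} for $d=2$), and you have faithfully sketched the Bowen--Sinai construction carried out there, correctly identifying that the only modifications needed are the substitution of Lemma~\ref{newHope}, Theorem~\ref{BMS}, and Corollary~\ref{localfinito} for their Lyapunov-regular predecessors. One small clarification in your step~(i) for item~(3): the reason $f^n$ of the common point lies in some $Z\in\mathcal{Z}_\epsilon$ containing $R_n$ is via the lift $\underline u\in\Sista_\epsilon$ you built in the construction of $\widehat\pi$ (which lands in $\Sistasharpepsilon$ by Corollary~\ref{localfinito} and pigeonhole), not directly from $\widehat\pi(\underline R)\in\overline{R_0}$; this is exactly the route taken in Proposition~\ref{imagecanonic}.
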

This theorem is the content of the main theorem of \cite{SBO}, Theorem 1.1 (and similarly the content of \cite[Theorem~1.3]{Sarig} when $d=2$). 

\begin{prop}\label{imagecanonic} $\forall \epsilon\in (0,\epsista_\chi]$,
$$\widehat{\pi}[\Sig_\epsilon^\#]=\pi[\Sistasharpepsilon]=\bigcupdot\mathcal{R}_\epsilon.$$
\end{prop}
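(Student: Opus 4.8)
The plan is to show the three sets coincide by a sequence of inclusions, exploiting the fact that $\mathcal{R}_\epsilon$ is a refinement of $\mathcal{Z}_\epsilon$ together with the local-finiteness properties already established. First I would recall that $\bigcupdot\mathcal{R}_\epsilon = \pi[\Sistasharpepsilon]$ holds essentially by construction: Definition \ref{Doomsday}(2) asks that $\mathcal{R}_\epsilon$ be a partition of $\bigcup_{v\in\Vista_\epsilon}\Zista_\epsilon(v)$, and by Theorem \ref{BMS} and Definition \ref{Doomsday}(1) this union is exactly $\pi[\Sistasharpepsilon]=\HWT_\chi^\epsilon$. So the only real content is the identity $\widehat{\pi}[\Sig_\epsilon^\#]=\pi[\Sistasharpepsilon]$.

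For the inclusion $\widehat{\pi}[\Sig_\epsilon^\#]\subseteq\pi[\Sistasharpepsilon]$, I would take $\ul{R}\in\Sig_\epsilon^\#$ and $x:=\widehat{\pi}(\ul{R})$. Since $R_0\in\mathcal{R}_\epsilon$ refines $\mathcal{Z}_\epsilon$, each $R_i$ lies inside some $\Zista_\epsilon(v_i)=\pi[\Sistasharpepsilon\cap[v_i]]$; one checks, as in Sarig's construction, that the word $\ul{v}=(v_i)_{i\in\mathbb{Z}}$ obtained this way is $\Eista_\epsilon$-admissible (the edge $R_i\to R_{i+1}$ forces the overlap condition between $v_i$ and $v_{i+1}$, because $f^{-1}[R_{i+1}]\cap R_i\neq\varnothing$ forces $\Zista_\epsilon(v_i)\cap f^{-1}[\Zista_\epsilon(v_{i+1})]\neq\varnothing$), and moreover $\pi(\ul{v})=\widehat{\pi}(\ul{R})=x$ by property (4) of $\widehat{\pi}$ (that $\widehat{\pi}(\ul{R})\in\overline{R_0}$) combined with the fact that $\widehat{\pi}$ and $\pi$ are built from the same graph transform. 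Finally, the pigeonhole/local-finiteness argument shows $\ul{v}\in\Sistasharpepsilon$: the recurrence of $R_{n_k}=R_{n_0}$ and the finiteness of $\{v:R_{n_0}\subseteq\Zista_\epsilon(v)\}$ forces some $v$ to recur infinitely often among the $v_{n_k}$, and symmetrically on the negative side. Hence $x=\pi(\ul{v})\in\pi[\Sistasharpepsilon]$.

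For the reverse inclusion $\pi[\Sistasharpepsilon]\subseteq\widehat{\pi}[\Sig_\epsilon^\#]$, I would take $\ul{u}\in\Sistasharpepsilon$ and $x:=\pi(\ul{u})$, and use the Markov partition $\mathcal{R}_\epsilon$ to code the orbit of $x$: define $R_i:=R(f^i(x))$, the unique element of $\mathcal{R}_\epsilon$ containing $f^i(x)$ (well-defined since $f^i(x)\in\pi[\Sistasharpepsilon]$, that set being $f$-invariant as the $\pi$-image of a shift-invariant set). The Markov property in Definition \ref{Doomsday}(2)(c) gives $R_i\to R_{i+1}$, so $\ul{R}:=(R_i)_{i\in\mathbb{Z}}\in\Sig_\epsilon$, and one checks $\widehat{\pi}(\ul{R})=x$ (the sequence $R(f^i(x))$ is the canonical itinerary, and $\widehat{\pi}$ inverts it on the image). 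To see $\ul{R}\in\Sig_\epsilon^\#$ one again recurs: each $R_i$ sits inside some $\Zista_\epsilon(v_i)$ with $v_i$ taken from the chain $\ul{u}$ (here one uses that $f^i(x)\in\Zista_\epsilon(u_i)$ so $R_i\subseteq\Zista_\epsilon(u_i)$, using $\mathcal{R}_\epsilon\prec\mathcal{Z}_\epsilon$); since $u_{n_k}=u_{n_0}$ for a subsequence and only finitely many $R\in\mathcal{R}_\epsilon$ satisfy $R\subseteq\Zista_\epsilon(u_{n_0})$ by Definition \ref{Doomsday}(2)(b), pigeonhole produces a single $R$ with $R_{n_{k_j}}=R$ for infinitely many $j$, and similarly on the negative side, so $\ul{R}\in\Sig_\epsilon^\#$.

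The main obstacle I anticipate is the bookkeeping in the two pigeonhole arguments—specifically verifying carefully that the auxiliary word $\ul{v}$ extracted from a Markov itinerary is genuinely $\Eista_\epsilon$-admissible and has $\pi$-image equal to $x$, since $\mathcal{R}_\epsilon$ refines $\mathcal{Z}_\epsilon$ but the correspondence $R\mapsto v$ with $R\subseteq\Zista_\epsilon(v)$ is not unique and one must check the choices can be made coherently along the orbit. This is exactly the step where Sarig's argument (\cite[\textsection~12]{Sarig}) for the analogous statement is invoked, and the honest thing is to note that once Corollary \ref{localfinito} and Theorem \ref{BMS} are in place, the relevant parts of that argument go through verbatim with $\Sista_\epsilon,\Sistasharpepsilon,\mathcal{R}_\epsilon$ replacing $\Sigma,\Sigma^\#,\mathcal{R}$, so the proof reduces to citing those results and assembling the inclusions above.
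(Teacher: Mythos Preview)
Your proposal is correct and follows essentially the same approach as the paper: both directions are handled by the same pigeonhole arguments, using Definition \ref{Doomsday}(2)(b) for $\pi[\Sistasharpepsilon]\subseteq\widehat{\pi}[\Sig_\epsilon^\#]$ and Corollary \ref{localfinito} for $\widehat{\pi}[\Sig_\epsilon^\#]\subseteq\pi[\Sistasharpepsilon]$, after observing that $\mathcal{R}_\epsilon$ is by definition a partition of $\pi[\Sistasharpepsilon]$. The paper's proof is terser on the admissibility point you flag as the ``main obstacle'': it simply observes that if $R\to S$ and $R\subseteq\Zista_\epsilon(u)$, then picking $\ul{u}\in\Sistasharpepsilon\cap[u]$ with $\pi(\ul{u})\in R\cap f^{-1}[S]$ forces $S\subseteq\Zista_\epsilon(u_1)$, which lets one extend the lift inductively---exactly the coherence you were worried about.
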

\begin{proof}
$\pi[\Sistasharpepsilon]=\bigcupdot\mathcal{R}_\epsilon$ by definition, since $\mathcal{R}_\epsilon$ is a partition of $\pi[\Sistasharpepsilon]$. We need to show $\widehat{\pi}[\Sig_\epsilon^\#]=\pi[\Sistasharpepsilon]$. 

\underline{$\supseteq$:} Let $\ul{u}\in \Sistasharpepsilon$, $\pi(\ul{u})\in \mathcal{R}_\epsilon$. Write $R_i:=$unique element of $\mathcal{R}_\epsilon$ which contains $f(\pi(\ul{u}))$, $i\in\mathbb{Z}$. It follows that $\ul{R}:=(R_i)_{i\in\mathbb{Z}}\in \Sig_\epsilon$, and that $\widehat{\pi}(\ul{R})=\pi(\ul{u})$ by the uniqueness of a shadowed orbit. Then by definition, $\forall i\in\mathbb{Z}$, $R_i\subseteq \Zista_\epsilon(u_i)$, and so by the pigeonhole principle, $\ul{R}\in \Sig_\epsilon^\#$ (see Definition \ref{Doomsday}(2)(b)).

\underline{$\subseteq$:} Let $R,S\in \mathcal{R}_\epsilon$ s.t. $\exists x\in R\cap f^{-1}[S]$. Let $u\in\Vista_\epsilon$ s.t. $R\subseteq \Zista_\epsilon(u)$. Then $\exists \ul{u}\in \Sistasharpepsilon\cap[u]$ s.t. $\pi(\ul{u})=x$, and so $S\subseteq \Zista_\epsilon(u_1)$. Given a chain $\ul{R}\in \Sig_\epsilon^\#$, choose $\Zista_\epsilon(u_0)\supseteq R_0$, and extend it this way to a chain $\ul{u}\in \Sista$ s.t. $R_i\subseteq \Zista_\epsilon(u_i)$ $\forall i\in\mathbb{Z}$. By the uniqueness of a shadowed orbit, $\pi(\ul{u})=\widehat{\pi}(\ul{R})$. By Corollary \ref{localfinito}
, and the pigeonhole principle, $\ul{u}\in \Sistasharpepsilon$. 
\end{proof}
\begin{cor}\label{cafeashter}
Let $p$ be a $\chi$-hyperbolic periodic point, such that $\HWT_\chi^{\epsista_\chi}\neq\varnothing$. Then $\forall \epsilon\in (0,\epsista_\chi]$, $$\widehat{\pi}[\Sig^\#_\epsilon]=\pi[\Sistasharpepsilon]=\HWT_\chi^\epsilon.$$
\end{cor}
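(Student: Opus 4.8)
The plan is to obtain the corollary by directly concatenating the two identities already established in this section. First I would observe that the hypotheses make the entire construction of Section~\ref{symbolicdynamicspart} available: the existence of a $\chi$-hyperbolic periodic point $p$ furnishes $\epsilon_\chi>0$ as in Theorem~\ref{mainSBO}, hence $\epsista_\chi=\min\{\epsilon_{\chi/2},\epsilon_\chi\}>0$ is well defined, and for every $\epsilon\in(0,\epsista_\chi]$ the graph $\Gista_\epsilon$, the shift $\Sista_\epsilon$, the factor map $\pi$, the partition $\mathcal{R}_\epsilon$ together with its graph, the shift $\Sig_\epsilon$, and the factor map $\widehat{\pi}$ all exist with the properties stated above.

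Next I would invoke Theorem~\ref{BMS}, which gives $\pi[\Sistasharpepsilon]=\HWT_\chi^\epsilon$ for every $\epsilon\in(0,\epsista_\chi]$, and then Proposition~\ref{imagecanonic}, which gives $\widehat{\pi}[\Sig_\epsilon^\#]=\pi[\Sistasharpepsilon]$ (equal also to $\bigcupdot\mathcal{R}_\epsilon$, since $\mathcal{R}_\epsilon$ is by construction a partition of $\pi[\Sistasharpepsilon]$). Chaining these two equalities yields exactly $\widehat{\pi}[\Sig_\epsilon^\#]=\pi[\Sistasharpepsilon]=\HWT_\chi^\epsilon$, which is the assertion.

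The only point deserving a remark is the role of the hypothesis $\HWT_\chi^{\epsista_\chi}\neq\varnothing$, whose purpose is merely to exclude the degenerate case in which every set in sight is empty. I would note that it is in fact automatic: since $p$ is periodic and $p\in\chi\mathrm{-hyp}\subseteq\chi\mathrm{-summ}$, one may take $q$ constant on the finite orbit of $p$, equal to the largest element of the grid $\{e^{-\ell\epsilon/3}\}_{\ell\geq0}$ lying below the minimum of $Q_\epsilon(\cdot)$ over that orbit; this $q$ witnesses $\epsilon$-weak temperability of $p$ for every $\epsilon\leq\epsista_\chi$, so $p\in\HWT_\chi^{\epsista_\chi}$ and, by Theorem~\ref{BMS}, $\Sistasharpepsilon$ and hence $\Sig_\epsilon^\#$ are nonempty. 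I do not anticipate any genuine obstacle: all of the analytic substance is already contained in Lemma~\ref{newHope}, Theorem~\ref{BMS} and Proposition~\ref{imagecanonic}, and this corollary is purely a recombination of their statements.
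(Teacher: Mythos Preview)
Your proposal is correct and matches the paper's own proof essentially verbatim: the paper simply cites Theorem~\ref{BMS} for $\pi[\Sistasharpepsilon]=\HWT_\chi^\epsilon$ and Proposition~\ref{imagecanonic} for $\widehat{\pi}[\Sig_\epsilon^\#]=\pi[\Sistasharpepsilon]$, then concludes. Your additional remark that $p\in\HWT_\chi^{\epsista_\chi}$ (so the nonemptiness hypothesis is automatic) is a nice observation that the paper defers to the opening of \textsection 4 rather than including here.
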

\begin{proof}
In Theorem \ref{BMS} we saw $\pi[\Sistasharpepsilon]= \HWT_\chi^\epsilon$. 
In Proposition \ref{imagecanonic} we showed the quick argument for the equality $\widehat{\pi}[\Sig_\epsilon^\#]=\pi[\Sistasharpepsilon]$
. Therefore we are done.
\end{proof}
\begin{definition}
	$\HWT_\chi^\star:=\HWT_\chi^{\epsista_\chi}$ is called the {\em recurrently-codable} points.
\end{definition}
Notice, by Claim \ref{canonicalepsilon}, $\bigcup\limits_{0<\epsilon\leq\frac{2}{3}\epsista_\chi}\HWT_\chi^\epsilon\subseteq \HWT_\chi^\star$.
\section{Ergodic Homoclinic Classes and Maximal Irreducible Components}
In this section $\epsilon$ is fixed and equals $\epsista_\chi$. The $\epsilon$ subscript of $\Sig_\epsilon,\Sig_\epsilon^\#,\mathcal{R}_\epsilon,\Sigma_\epsilon,\Sigma_\epsilon^\#$ will be omitted to ease notation.

Let $p$ be a periodic point in $\chi\mathrm{-summ}$. Since $p$ is periodic, $\|C^{-1}_\chi(\cdot)\|$ is bounded along the orbit of $p$, and therefore $p\in \HWT_\chi^\star$. Every point $x\in \HWT_\chi^\star$ is (recurrently-)codable, and so has a local stable manifold $V^s(x)$ (e.g. $V^s(\ul{u})$, $\ul{u}\in\pi^{-1}[\{x\}]\cap \Sigma^\#$), and a global stable manifold $W^s(x):=\bigcup_{n\geq0}f^{-n}[V^s(f^n(x))]$ (similarly for a global unstable manifold).
\begin{definition}\label{homoclinicclass}The {\em ergodic homoclinic class} of $p$ is
    $$H(p):=\left\{x\in \HWT_\chi^\star:W^u(x)\pitchfork W^s(o(p))\neq\varnothing,W^s(x)\pitchfork  W^u(o(p))\neq\varnothing\right\},$$
    where $\pitchfork$ denotes transverse intersections of full codimension, $o(p)$ is the (finite) orbit of $p$, and $W^{s}(\cdot),W^{u}(\cdot)$ are the global stable and unstable manifolds of a point (or points in an orbit), respectively.
    \end{definition}
\noindent This notion was introduced in \cite{RodriguezHertz}, with a set of Lyapunov regular points replacing $\HWT_\chi^\star$. Every ergodic conservative $\chi$-hyperbolic measure, is carried by an ergodic homoclinic class of some periodic hyperbolic point. 
\begin{definition}\label{irreducibility}
\begin{enumerate}
    \item Define $\sim\subseteq\mathcal{R}\times\mathcal{R}$ by $R\sim S\iff \exists n_{RS},n_{SR}\in\mathbb{N}\text{ s.t. } R\xrightarrow[]{n_{RS}}S,S\xrightarrow[]{n_{SR}}R$, i.e. there is a path of length $n_{RS}$ connecting $R$ to $S$, and a path of length $n_{SR}$ connecting $S$ to $R$. The relation $\sim$ is transitive and symmetric. When restricted to $\{R\in \mathcal{R}:R\sim R\}$, it is also reflexive, and thus an equivalence relation. Denote the corresponding equivalence class of some representative $R\in\mathcal{R}$, $R\sim R$ by $\langle R\rangle$.
    \item A {\em maximal irreducible component} in $\Sig$, corresponding to $R\in\mathcal{R}$ s.t. $R\sim R$, is $\{\ul{R}\in\Sig: \ul{R}\in\langle R\rangle^\mathbb{Z}\}$.
\end{enumerate}
\end{definition}

\begin{lemma}\label{avecOmri}
Let $p\in \chi\mathrm{-summ}$ s.t. $\exists l\in\mathbb{N}$ s.t. $f^l(p)=p$, then $p\in\chi\mathrm{-hyp}$.
\end{lemma}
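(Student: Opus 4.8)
The plan is to exploit the periodicity by reducing everything to the single linear automorphism $A:=d_pf^l$ of $T_pM$ and its spectral radii. Note first that $\chi$-summability already yields, for every $\xi_s\in H^s(p)\setminus\{0\}$ and $\xi_u\in H^u(p)\setminus\{0\}$, the \emph{non-strict} bounds $\limsup_n\frac1n\log|d_pf^{n}\xi_s|\le-\chi$ and $\limsup_n\frac1n\log|d_pf^{-n}\xi_u|\le-\chi$ (a convergent series has terms tending to $0$); the whole task is to make these strict, and that is precisely where periodicity must enter.

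First I would set $A:=d_pf^l$ and record two facts. (i) Since $f^{\pm kl}(p)=p$, for $n=kl+j$ with $0\le j<l$ one has $d_pf^{n}=d_pf^{j}\circ A^{k}$ and $d_pf^{-n}=d_pf^{-j}\circ A^{-k}$; as $\{d_pf^{j}\}_{0\le j<l}$ is a finite family of invertible linear maps, $|d_pf^{n}\xi|$ is comparable to $|A^{k}\xi|$ with constants independent of $n$, and likewise for $f^{-n}$ and $A^{-k}$. (ii) $A$ preserves the splitting: being periodic, $\|C_\chi^{-1}(\cdot)\|$ is bounded on $o(p)$, so $p\in\HWT_\chi^\star$, hence $p$ is $\epsista_\chi$-weakly temperable, hence (the Remark following Definition~\ref{NUHsharp}, using $\epsista_\chi\le\epsilon_\chi$) its $\chi$-summable splitting is unique --- equivalently $H^s(p)$, resp.\ $H^u(p)$, is exactly the set of $\eta\in T_pM$ with $\sum_{m\ge0}|d_pf^{m}\eta|^2e^{2\chi m}<\infty$, resp.\ $\sum_{m\ge0}|d_pf^{-m}\eta|^2e^{2\chi m}<\infty$. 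Replacing $\eta$ by $A^{\pm1}\eta$ only shifts the summation index by $\pm l$, i.e.\ adds or removes finitely many finite terms, so both sets are $A^{\pm1}$-invariant and therefore $A(H^s(p))=H^s(p)$, $A(H^u(p))=H^u(p)$. (One may avoid the uniqueness Remark: by \eqref{train}, a nonzero $H^u(p)$-component $\eta_u$ of $\eta$ satisfies $|C_\chi^{-1}(f^m(p))d_pf^m\eta_u|\ge e^{\chi m}|C_\chi^{-1}(p)\eta_u|$ since $\|D_u^{-1}(\cdot)\|\le e^{-\chi}$ along $o(p)$, and as $\|C_\chi^{-1}(\cdot)\|$ is bounded on $o(p)$ this keeps $|d_pf^m\eta_u|^2e^{2\chi m}$ bounded away from $0$, contradicting summability unless $\eta_u=0$.)

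The hard part is the strict spectral-radius bound $\rho_s:=\rho(A|_{H^s(p)})<e^{-\chi l}$ (and, symmetrically, $\rho_u:=\rho(A^{-1}|_{H^u(p)})<e^{-\chi l}$). From $\chi$-summability, $C_s:=\sup\{\sum_{m\ge0}|d_pf^{m}\xi_s|^2e^{2\chi m}:\xi_s\in H^s(p),|\xi_s|=1\}<\infty$; keeping only $m=kl$ gives $\|A^{k}|_{H^s(p)}\|^2e^{2\chi lk}\le C_s$, so $\rho_s\le e^{-\chi l}$. If $\rho_s=e^{-\chi l}$, complexify: pick $z\in H^s(p)\otimes\mathbb C$, $z\ne0$, with $Az=\lambda z$, $|\lambda|=\rho_s$, and write $z=a+ib$ with $a,b\in H^s(p)$; then $|A^ka|^2+|A^kb|^2=|A^kz|^2=\rho_s^{2k}|z|^2$ (Hermitian extension of $\langle\cdot,\cdot\rangle_p$), so by pigeonhole one of $a,b$ --- call it $w$, necessarily $\ne0$ --- has $|A^kw|\ge\frac1{\sqrt2}\rho_s^k|z|$ for infinitely many $k$. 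Then $|d_pf^{kl}w|^2e^{2\chi lk}=|A^kw|^2e^{2\chi lk}\ge\frac12|z|^2$ along that subsequence, so $\sum_{m\ge0}|d_pf^{m}w|^2e^{2\chi m}=\infty$, contradicting $w\in H^s(p)$. Hence $\rho_s<e^{-\chi l}$; the same argument with unstable summability and $d_pf^{-kl}=A^{-k}$ gives $\rho_u<e^{-\chi l}$. To finish, fact (i) and Gelfand's formula give, for any $\delta>0$ and $\xi_s\in H^s(p)\setminus\{0\}$, $|d_pf^{kl+j}\xi_s|\le(\max_{0\le j<l}\|d_pf^{j}\|)\,\|A^k|_{H^s(p)}\|\,|\xi_s|\le C_\delta(\rho_s+\delta)^k|\xi_s|$, so $\limsup_n\frac1n\log|d_pf^{n}\xi_s|\le\frac1l\log(\rho_s+\delta)$ for all $\delta>0$, i.e.\ $\le\frac1l\log\rho_s<-\chi$; symmetrically $\limsup_n\frac1n\log|d_pf^{-n}\xi_u|\le\frac1l\log\rho_u<-\chi$ for $\xi_u\in H^u(p)\setminus\{0\}$. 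Thus $p\in\chi\mathrm{-hyp}$ with the same splitting $H^s(p)\oplus H^u(p)$, and moreover $\chi(p)\ge-\tfrac1l\max\{\log\rho_s,\log\rho_u\}>\chi$, consistently with Definition~\ref{ChiHyp}(3). The main obstacle is exactly this strict spectral-radius estimate, since a general $\chi$-summable point need not lie in $\chi\mathrm{-hyp}$.
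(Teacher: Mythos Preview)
Your proof is correct and takes a genuinely different route from the paper's. The paper first reduces to the fixed-point case $f(p)=p$, shows via an orthonormal-basis/Cauchy--Schwarz argument that $\sum_m\|d_pf^m|_{H^s(p)}\|^2e^{2\chi m}<\infty$, and then puts $d_pf|_{H^s(p)}$ into (real) Jordan form: the existence of an eigenvector forces $|\lambda|<e^{-\chi}$, and the binomial expansion of $(\lambda I+N)^m$ gives the polynomial correction $\|d_pf^m|_{H^s(p)}\|\le C\,m^{s(p)}|\lambda|^m$, hence the strict $\limsup$ bound. You instead work directly with the spectral radius of $A=d_pf^l$ on $H^s(p)$: the termwise bound $\|A^k|_{H^s(p)}\|\le\sqrt{C_s}\,e^{-\chi lk}$ gives $\rho_s\le e^{-\chi l}$, and you rule out equality by complexifying, taking an eigenvector $z=a+ib$ on the spectral circle, and observing that one of its real/imaginary parts produces infinitely many terms bounded below in the $\chi$-summable series. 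Gelfand's formula then closes the argument.

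What each buys: the paper's Jordan computation is concrete but sweeps two points under the rug --- that $d_pf$ actually maps $H^s(p)$ into itself (which you justify carefully, via uniqueness of the splitting or the direct growth argument from \eqref{train}), and that ``w.l.o.g.\ a single Jordan block'' really means treating each block separately. Your spectral-radius/complexification argument is cleaner and coordinate-free, avoids the binomial bookkeeping, and handles the general period $l$ in one stroke rather than reducing to $l=1$. Both ultimately encode the same fact (a power-bounded operator has spectral radius $\le1$, and $<1$ if the powers are square-summable), but your packaging is more transparent.
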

\begin{proof}
We prove an exponential contraction strictly stronger than $e^{-\chi}$ on $H^s(p)$. The case for $H^u(p)$ is similar. First assume that $f(p)=p$. Since $p$ is $\chi$-summable, $\forall\xi\in H^s(p)$ with $|\xi|=1$, $\sum_{m=0}^\infty|d_pf^m\xi|^2e^{2\chi m}<\infty$. Let $\{\xi_i\}_{i=1}^{s(p)}$ be an orthonormal basis for $H^s(p)$ (w.r.t $\langle \cdot,\cdot\rangle_p$, the Riemannian form at $T_pM$). For all $m\geq0$, $\exists \xi^{(m)}\in H^s(p)$ with $|\xi^{(m)}|=1$ s.t. $|d_pf^m\xi^{(m)}|=\|d_pf^m|_{H^s(p)}\|$. Whence, for $a_i^{(m)}:=\langle \xi^{(m)},\xi_i\rangle_p$,
\begin{align}\label{jordan}
    \sum_{m=0}^\infty\|d_pf ^m |_{H^s(p)}\|^2e^{2\chi m}=&\sum_{m=0}^\infty|d_pf^m\xi^{(m)}|^2e^{2\chi m}\leq \sum_{m=0}^\infty\left(\sum_{i=1}^{s(p)}|a_i^{(m)}|\cdot|d_pf^m\xi_i|\right)^2e^{2\chi m}\\
    \leq&\sum_{m=0}^\infty\left(\sum_{i=1}^{s(p)}|d_pf^m\xi_i|\right)^2e^{2\chi m}=\sum_{i,j=1}^{s(p)}\sum_{m=0}^\infty\left(|d_pf^m\xi_i|e^{\chi m}\right)\left(|d_pf^m\xi_j|e^{\chi m}\right) \nonumber\\
    \leq &\sum_{i,j=1}^{s(p)}\sqrt{\sum_{m=0}^\infty|d_pf^m\xi_i|^2e^{2\chi m}}\cdot\sqrt{\sum_{m=0}^\infty|d_pf^m\xi_j|^2e^{2\chi m}}\nonumber\\
    \leq& d^2\max_{i\leq s(p)}\left\{\sum_{m=0}^\infty|d_pf^m\xi_i|^2e^{2\chi m}\right\}<\infty, \nonumber
\end{align}
where the third line is by the Cauchy-Schwarz inequality, and the second line is since $|a_i^{(m)}|\leq1,i\leq s(p)$ (by the Cauchy-Schwarz inequality as well).
$d_pf:H^s(p)\rightarrow H^s(p)$ is a linear map, then by working in coordinates, it is sufficient to assume w.l.o.g. that $d_pf$ is of the form of a Jordan block $J_{s(p)}(\lambda)$ for some $\lambda\in \mathbb{R}\setminus\{0\}$ (since $f$ is a diffeomorphism).
Let $N$ denote the nilpotent matrix whose superdiagonal (entries right above the diagonal) is all ones, and all other entries are zero, and $I$ denotes the identity matrix. Then $N^{s(p)}=0$, and trivially $N$ and $\lambda I$ commute. Then $J_{s(p)}(\lambda)=\lambda I+N$, and by the binomial theorem (w.l.o.g. $m\geq s(p)$),
\begin{equation}\label{jordan2}
J_{s(p)}(\lambda)^m=\left(\lambda I+N\right)^m=\sum_{k=0}^{s(p)}\binom{m}{k}\lambda^{m-k}N^k.
\end{equation}
Since $J_{s(p)}(\lambda)$ is a Jordan block, it admits an eigenvector with an eigenvalue $\lambda$, and so $\|J_{s(p)}(\lambda)\|\geq |\lambda|$, and 
$\|d_pf^m|_{H^s(p)}\|\geq |\lambda|^m$. Hence, by equation \eqref{jordan}, $\sum_{m=0}^\infty|\lambda|^{2m}e^{2\chi m}\leq \sum_{m=0}^\infty\|d_pf^m|_{H^s(p)}\|^2e^{2\chi m}<\infty$, and so $0<|\lambda|<e^{-\chi}$. On the other hand, by equation \eqref{jordan2}, $\|d_pf^m|_{H^s(p)}\|\leq C\cdot m^{s(p)}\cdot|\lambda|^m$ for some $C=C(p)>0$. Whence, $\|d_pf^m|_{H^s(p)}\|\leq C \cdot m^{s(p)}\cdot e^{-\chi' m}$, where $\chi':=\log |\lambda|<-\chi$. Thus, $\limsup_{m\rightarrow\infty}\frac{1}{m}\log\|d_pf^m|_{H^s(p)}\|<-\chi$. This concludes the proof for the case $f(p)=p$.

In the case where the period of $p$ is $l>1$, write $\forall \xi\in H^s(p)$,
\begin{equation*}
    \sum_{m=0}^\infty|d_pf^m\xi|^2e^{2\chi m}=\sum_{i=0}^{l-1}e^{2i\chi}\sum_{m=0}^{\infty}|d_{f^i(p)}(f^l)^{m}(d_pf^i\xi)|^2e^{2(l\chi)m}<\infty.
\end{equation*}
Then by the first part of this proof, $\exists \lambda_i$, $i=0,...,l-1$ s.t. $0<|\lambda|<e^{-\chi\cdot l}$ and $C_i=C_i(p)>0$ s.t. $\forall 0\leq i\leq l-1, l\geq0$, $\|d_pf^{m\cdot l+i}|_{H^s(p)}\|\leq \max_{i}\{C_i\}\max_i\{|\lambda_i|^{m}\}\cdot m^{s(p)}$. As in the case of $f(p)=p$, this is sufficient. 
\end{proof}
 
The following two definitions are due to Sarig in \cite[\textsection~4.2,Definition~4.8]{Sarig} (the version here corresponds to the case $d\geq2$ from \cite[Definition~3.1,Definition~3.2]{SBO}).
\begin{definition}\label{def135} Let $x\in \HWT_\chi^\star$, a {\em $u-$manifold} in $\psi_x$ is a manifold $V^u\subset M$ of the form
$$V^u=\psi_x[\{(F_1^u(t_{s(x)+1},...,t_d),...,F_{s(x)}^u(t_{s(x)+1},...,t_d),t_{s(x)+1},...t_d) : |t_i|\leq q\}],$$
where $0<q\leq Q_\epsilon(x)$, and $\overrightarrow{F}^u$ is a $C^{1+\beta/3}$ function s.t. $\max\limits_{\overline{R_q(0)}}|\overrightarrow{F}^u|_\infty\leq Q_\epsilon(x)$.

Similarly we define an {\em $s-$manifold} in $\psi_x$:
$$V^s=\psi_x[\{(t_1,...,t_{s(x)},F_{s(x)+1}^s(t_1,...,t_{s(x)}),...,F_d^s(t_1,...,t_{s(x)})): |t_i|\leq q\}],$$
with the same requirements for $\overrightarrow{F}^s$ and $q$. We will use the superscript ``$u/s$" in statements which apply to both the $u$ case and the $s$ case. The function $\overrightarrow{F}=\overrightarrow{F}^{u/s}$ is called the {\em representing function} of $V^{u/s}$ at $\psi_x$. The parameters of a $u/s$ manifold in $\psi_x$ are: 
\begin{itemize}
\item $\sigma-$parameter: $\sigma(V^{u/s}):=\|d_{\cdot}\overrightarrow{F}\|_{\beta/3}:=\max\limits_{\overline{R_q(0)}}\|d_{\cdot}\overrightarrow{F}\|+\text{H\"ol}_{\beta/3}(d_{\cdot}\overrightarrow{F})$,

where $\text{H\"ol}_{\beta/3}(d_{\cdot}\overrightarrow{F}):=\max\limits_{\vec{t_1},\vec{t_2}\in\overline{R_q(0)}}\{\frac{\|d_{\overrightarrow{t_1}}\overrightarrow{F}-d_{\overrightarrow{t_2}}\overrightarrow{F}\|}{|\overrightarrow{t_1}-\overrightarrow{t_2}|^{\beta/3}}\}$ and $\|A\|:=\sup\limits_{v\neq0}\frac{|Av|_\infty}{|v|_\infty}$.
\item $\gamma-$parameter: $\gamma(V^{u/s}):=\|d_0\overrightarrow{F}\|$
\item $\varphi-$parameter: $\varphi(V^{u/s}):=|\overrightarrow{F}(0)|_\infty$
\item $q-$parameter: $q(V^{u/s}):=q$
\end{itemize}

A {\em $(u/s,\sigma,\gamma,\varphi,q)-$manifold} in $\psi_x$ is a $u/s$ manifold $V^{u/s}$ in $\psi_x$ whose parameters satisfy $\sigma(V^{u/s})\leq\sigma,\gamma(V^{u/s})\leq\gamma,\varphi(V^{u/s})\leq\varphi,q(V^{u/s})\leq q$.
\end{definition}
Notice that the dimensions of an $s$ or a $u$ manifold in $\psi_x$ depend on $x$. Their sum is $d$.
\begin{definition}\label{admissible} Suppose $x\in \HWT_\chi^
\star$ and $0<p^s,p^u\leq Q_\epsilon(x)$ (i.e. $\psi_x^{p^s,p^u}$ is a double Pesin-chart)
. A $u/s$-admissible manifold in $\psi_x^{p^s,p^u}$ is a {\em $(u/s,\sigma,\gamma,\varphi,q)-$manifold} in $\psi_x$ s.t.
$$\sigma\leq\frac{1}{2},\gamma\leq\frac{1}{2}(p^u\wedge p^s)^{\beta/3},\varphi\leq10^{-3}(p^u\wedge p^s),\text{ and }q = \begin{cases} p^u & u-\text{manifolds} \\ 
p^s & s-\text{manifolds} \end{cases}.$$
\end{definition} Recall: $\forall \ul{u}\in \Sigma$ there exists a local stable manifold for $\pi(\ul{u})$, $V^s(\ul{u})= V^s((u_i)_{i\geq0})$, and a local unstable manifold for $\pi(\ul{u})$, $V^u(\ul{u})= V^u((u_i)_{i\leq0})$ (see \cite[Proposition~3.12,Proposition~4.4]{SBO}; or \cite[Proposition~4.15,Proposition~6.3]{Sarig} in the case $d=2$). $V^s(\ul{u})$, $V^u(\ul{u})$ are admissible manifolds in $u_0=\psi_{x_0}^{p^s_0,p^u_0}$.

\begin{definition}\label{Ledrappier} (This definition was introduced in \cite[Lemma~4.6]{Sarig}, and is due to F. Ledrappier) $\forall x\in \HWT_\chi^\star$, 
$$p^u_n(x):=\max\{t\in\{e^{-\frac{\ell\epsilon}{3}}\}_{\ell\geq0}:e^{-\epsilon N}t\leq Q_\epsilon(f^{n-N}(x)),\forall N\geq0\},$$ 
$$p^s_n(x):=\max\{t\in\{e^{-\frac{\ell\epsilon}{3}}\}_{\ell\geq0}:e^{-\epsilon N}t\leq Q_\epsilon(f^{n+N}(x)),\forall N\geq0\}.$$
\end{definition} 
Note: the chain $\{\psi_{f^n(x)}^{p^s_n(x),p^u_n(x)}\}_{n\in\mathbb{Z}}$ is admissible
 (see \cite[Definifion~2.23]{SBO} for the conditions for an edge between two double Pesin-charts).
\begin{lemma}\label{ForUniformHyperbolicity}
	Let $p$ be a $\chi$-hyperbolic periodic point, and let $\ul{u}$ be the admissible (periodic) chain
	 $\{\psi_{f^n(p)}^{p^s_n(p),p^u_n(p)}\}_{n\in\mathbb{Z}}$. Let $x\in V^u(\ul{u})\cap \HWT_\chi^\star$ s.t. $\mathrm{dim}H^s(x)= \mathrm{dim}H^s(p)$. Then $$\limsup_{n\rightarrow\infty} \sup_{\xi_n\in H^s(f^{-n}(x)),|\xi_n|=1}S(f^{-n}(x),\xi_n)\leq \max_i\{\sup_{\eta_i\in H^s(f^i(p)),|\eta_i|=1}S(f^i(p),\eta_i)\}<\infty.$$ A similar claim holds for $x\in V^s(\ul{u})\cap \HWT_\chi^\star$.
\end{lemma}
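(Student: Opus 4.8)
The plan is to run everything along the backward orbit $\{f^{-n}(x)\}_{n\ge 0}$, using that this orbit stays in a fixed small neighbourhood of the hyperbolic periodic orbit $o(p)$ and converges to it, and to conclude by an upper‑semicontinuity argument for $\Phi(\cdot):=\sup\{S(\cdot,\xi):\xi\in H^s(\cdot),|\xi|=1\}$ at the points of $o(p)$.

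\emph{Step 1: localisation and convergence of the backward orbit.} Since $p$ is periodic, $Q_\epsilon$ and the Ledrappier parameters are $l$‑periodic along $o(p)$ ($l$ the period of $p$), so $\ul u$ is a \emph{periodic} admissible chain with $\pi(\ul u)=p$, and $\sigma^{-n}\ul u$ runs through only $l$ chains, each an admissible $u$‑manifold in $\psi_{f^{-n}(p)}^{p^s_{-n}(p),p^u_{-n}(p)}$. By the graph–transform property of admissible $u$‑manifolds (\cite[Proposition~3.12,Proposition~4.4]{SBO}), $f^{-n}[V^u(\ul u)]\subseteq V^u(\sigma^{-n}\ul u)$, so $f^{-n}(x)\in V^u(\sigma^{-n}\ul u)$ stays in a fixed neighbourhood $U$ of $o(p)$ of size $\lesssim\epsilon$. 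Moreover $f^{-l}$ maps $V^u(\ul u)$ into itself and, since $p\in\chi$-hyp by Lemma~\ref{avecOmri} (so $d_pf^{-l}|_{H^u(p)}$ has norm $<e^{-\chi l}<1$ and $V^u(\ul u)$ is $C^1$‑close to $H^u(p)$), it is a contraction there fixing $p$; hence $f^{-(j+lk)}(x)\xrightarrow[k\to\infty]{}f^{-j}(p)$ for each residue class $j$ mod $l$. In particular $d(f^{-n}(x),o(p))\to 0$.

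\emph{Step 2: uniform bound and uniform integrability of $S^2$ along the backward orbit.} Again by Lemma~\ref{avecOmri}, $\chi(p)>\chi$, so $o(p)$ is a hyperbolic periodic orbit whose stable rate is strictly stronger than $e^{-\chi}$: there are $U\supseteq o(p)$, $C\ge1$ and $\mu\in(0,e^{-\chi})$ with $|d_yf^m\xi|\le C\mu^m|\xi|$ for $\xi$ in the stable cone at $y$ whenever $y,\dots,f^m(y)\in U$. For $y=f^{-n}(x)$ and $\xi\in H^s(y)$, $|\xi|=1$: the segment $y,\dots,f^n(y)=x$ lies in $U$ and $\xi$ is forward‑contracting, so $\xi$ lies in the stable cone once $n$ is large (here $\dim H^s(f^{-n}(x))=\dim H^s(p)$ is used, matching cone dimensions; a forward‑contracting direction over a long hyperbolic orbit segment is close to the stable direction). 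Writing $\widehat v:=v/|v|$ and splitting
\[
S^2(f^{-n}(x),\xi)=2\sum_{m=0}^{n-1}|d_{f^{-n}(x)}f^m\xi|^2e^{2\chi m}+e^{2\chi n}|d_{f^{-n}(x)}f^n\xi|^2\,S^2\bigl(x,\widehat{d_{f^{-n}(x)}f^n\xi}\bigr),
\]
and using $d_{f^{-n}(x)}f^n\xi\in H^s(x)$ with the cone estimate on the first $n$ steps, one gets a bound $\Phi(f^{-n}(x))^2\le \tfrac{2C^2}{1-(\mu e^\chi)^2}+C^2(\mu e^\chi)^{2n}\Phi(x)^2$ for large $n$, and, more importantly, the tail bound: for $M\le n$, the tail $2\sum_{m\ge M}|d_{f^{-n}(x)}f^m\xi|^2e^{2\chi m}\le (\mu e^\chi)^{2M}\bigl(\tfrac{2C^2}{1-(\mu e^\chi)^2}+C^2\Phi(x)^2\bigr)\to 0$ as $M\to\infty$, uniformly in $n\ge M$ and in $\xi\in H^s(f^{-n}(x))$, $|\xi|=1$.

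\emph{Step 3: conclusion via upper semicontinuity.} Put $A:=\max_i\sup_{\eta_i\in H^s(f^i(p)),|\eta_i|=1}S(f^i(p),\eta_i)<\infty$ (finite since $p\in\chi\mathrm{-summ}$) and $L:=\limsup_n\Phi(f^{-n}(x))$. Choose $n_k\uparrow\infty$ with $\Phi(f^{-n_k}(x))\to L$ and, after passing to a subsequence, $f^{-n_k}(x)\to p_j:=f^{-j}(p)$ for some fixed $j$; choose $\xi_k\in H^s(f^{-n_k}(x))$, $|\xi_k|=1$, with $S(f^{-n_k}(x),\xi_k)\ge\Phi(f^{-n_k}(x))-1/k$ and $\xi_k\to\xi_\infty$, $|\xi_\infty|=1$. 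By Fatou and the uniform bound of Step~2, $\sum_m|d_{p_j}f^m\xi_\infty|^2e^{2\chi m}<\infty$, and since a vector with nonzero $H^u(p_j)$‑component has forward orbit blowing up, $\xi_\infty\in H^s(p_j)$. For each fixed $M$ the finite sums $2\sum_{m=0}^{M-1}|d_{f^{-n_k}(x)}f^m\xi_k|^2e^{2\chi m}$ converge to $2\sum_{m=0}^{M-1}|d_{p_j}f^m\xi_\infty|^2e^{2\chi m}\le S^2(p_j,\xi_\infty)\le A^2$, while the tails are $\le(\mu e^\chi)^{2M}\cdot\mathrm{const}$ for $n_k\ge M$ by Step~2; hence $L^2=\limsup_k S^2(f^{-n_k}(x),\xi_k)\le A^2+(\mu e^\chi)^{2M}\cdot\mathrm{const}$ for all $M$, so $L\le A$, which is the assertion.

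The main obstacle is Step~2: getting the \emph{clean} constant $A$ rather than $A$ times a factor such as $e^{\sqrt\epsilon}$ (which the comparison lemmas of \cite[\textsection~4]{SBO} would give directly) requires genuine uniform integrability of the series defining $S^2$ along the backward orbit, and this is available only because the contraction of $p$ on $H^s(p)$ is strictly stronger than $e^{-\chi}$ — exactly Lemma~\ref{avecOmri} — together with the fact that the segment $f^{-n}(x),\dots,x$ sits in a neighbourhood where this strict hyperbolicity persists; the part of the forward orbit past $x$ contributes only the single remainder term $e^{2\chi n}|d_{f^{-n}(x)}f^n\xi|^2 S^2(x,\cdot)$, harmless because $f^{-n}(x)$ has already contracted by $\mu^n$. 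The case $x\in V^s(\ul u)\cap \HWT_\chi^\star$ is identical after exchanging $f$ with $f^{-1}$, $H^s$ with $H^u$, $S$ with $U$, and backward with forward orbits.
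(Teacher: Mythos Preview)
Your argument is correct and genuinely different from the paper's. The paper does not use a compactness/uniform-integrability argument at all; instead it works with the exact one-step recursion
\[
S^2(f^{-1}(y),d_yf^{-1}\eta)=e^{2\chi}S^2(y,\eta)+2|d_yf^{-1}\eta|^2,
\]
writes the analogous recursion at $p$, and tracks the ratio $\rho=\max\{S(f^{-n}(x),\cdot)/S(p,\cdot),\,S(p,\cdot)/S(f^{-n}(x),\cdot)\}$ through a chart-comparison map $\eta:T_{f^{-n}(x)}V^s\to H^s(p)$. A direct computation shows that if $\rho\ge e^{\sqrt\delta}$ then one backward step improves $\rho$ by a factor $e^{-\delta}$, while if $\rho\le e^{\sqrt\delta}$ it can deteriorate by at most $e^{2\delta}$; letting $\delta\to 0$ forces the ratio to converge to $1$. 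This yields more than the stated inequality --- it gives the limit equality $\lim_n S(f^{-n}(x),\xi_n)/S(p,\eta(\xi_n))=1$ vectorwise --- at the price of a somewhat delicate calculation.

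Your route is more direct and more in the spirit of classical uniform hyperbolicity: Lemma~\ref{avecOmri} supplies a rate $\mu<e^{-\chi}$ near $o(p)$, which turns the geometric series defining $S^2$ into a uniformly summable one along $\{f^{-n}(x)\}$, and then upper semicontinuity at $o(p)$ finishes. One point to tighten in your Step~2: the estimate $|d_yf^m\xi|\le C\mu^m|\xi|$ for $\xi$ ``in the stable cone'' is not a cone-field statement per se (stable cones are backward-, not forward-, invariant); what makes it work is that $H^s(f^{-k}(x))$ is a $df$-invariant family of subspaces which, by the inclination/cone argument you invoke, lies in the stable cone for all $k\ge N_0$, so the single-step contraction (in an adapted norm) iterates along it for $0\le m\le n-N_0$, and the remaining $N_0$ steps are absorbed in $C$. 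With that clarification the proof stands.
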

\begin{proof}

Fix $\delta\in(0,1)$. Assume w.l.o.g. that $f(p)=p$. By Definition \ref{Ledrappier}, if $f(p)=p$, then $\sigma\ul{u}=\ul{u}$. 
By the Inclination Lemma (\cite[Theorem~5.7.2]{BrinStuck})
, we may assume w.l.o.g. that $d_{C^1}(V^s(f^{-i}(x)), V^s(\ul{u}))\leq\delta$ $\forall i\geq0$, where $V^s(f^{-i}(x))$ is the part of $W^s(f^{-i}(x))$ which is close in $C^1$-norm to $V^s(\ul{u})$, and the $C^1$-distance is calculated in the chart $\psi_p^{Q_\epsilon(p)}$. In particular, since $V^s(\ul{u})$ is an admissible manifold in $\psi_p^{Q_\epsilon(p)}$, $V^s(f^{-i}(x))$ is the graph of a $C^1$-smooth function. Denote the function representing the graph of $V^s(\ul{u})$ by $F$, and the function representing the graph of $V^s(f^{-i}(x))$ by $G_i$, $i\geq0$. Let $P_s:\mathbb{R}^{d}\rightarrow \mathbb{R}^{s(x)}$ be the projection to the $s(x)$ first coordinates, and let $\xi\in H^s(x)$ s.t. $|d_x(f^{-1})\xi|=1$. $\xi=d_{\psi_p^{-1}(x)}\psi_p(u,d_{P_s\psi_p^{-1}(x)}G_0u)$ for some $u\in \mathbb{R}^{s(p)}$. Define $\eta=\eta(\xi):= d_0\psi_p(u,d_0Fu)\in H^s(p)$. Write again $d_x(f^{-1})\xi= d_{\psi_p^{-1}(f^{-1}(x))}\psi_p(v,d_{P_s\psi_p^{-1}(f^{-1}(x))}G_1v) $ for some $v\in\mathbb{R}^{s(x)}$, and define $\zeta=\zeta(\xi):= d_0\psi_p(v,d_0Fv)\in H^s(p)$. Notice, $\eta:T_xV^s(x)\rightarrow H^s(p)$ is a linear map, and in fact the definition extends natutally to $\eta:T_{f^{-i}(x)}V^s(f^{-i}(x))\rightarrow H^s(p)$, $\forall i\geq0$. Thus, by the Inclination Lemma, assume w.l.o.g. that $\Big|| d_x(f^{-1})\xi|^2-|d_p(f^{-1})\eta|^2\Big|\leq \delta$. Define $\rho:=\max\{\frac{S(x,\xi)}{S(p,\eta)},\frac{S(p,\eta)}{S(x,\xi))}\}$. 

Step 1: \begin{align*}
 		S^2(f^{-1}(x),d_x(f^{-1})\xi)=&2\sum_{m=0}^\infty|d_{f^{-1}(x)}f^md_x(f^{-1})\xi|^2e^{2\chi m}=S^2(x,\xi)e^{2\chi}+2|d_x(f^{-1})\xi|^2\\
 		\leq& \rho^2e^{2\chi}S^2(p,\eta)+2|d_x(f^{-1})\xi|^2.\\
 		S^2(p,d_p(f^{-1})\eta)=&2\sum_{m=0}^\infty|d_{p}f^md_p(f^{-1})\eta|^2e^{2\chi m}=S^2(p,\eta)e^{2\chi}+2|d_p(f^{-1})\eta|^2.
\end{align*}
Then,
\begin{align}\label{fridaynoon}
	\frac{S^2(f^{-1}(x),d_x(f^{-1})\xi)}{S^2(p,d_p(f^{-1})\eta)}\leq&\frac{\rho^2e^{2\chi}S^2(p,\eta)+2|d_x (f^{-1})\xi|^2}{S^2(p,\eta)2^{2\chi}+2|d_p (f^{-1})\eta|^2}\\
	=&\rho^2-\frac{2(\rho^2-1)|d_p(f^{-1})\eta|^2+2(|d_p (f^{-1}) \eta|^2-|d_x (f^{-1}) \xi|^2)}{S^2(p,d_p(f^{-1})\eta)}.\nonumber
\end{align}	
Now, if $\rho\geq e^{\sqrt{\delta}}$, then $\rho^2-1\geq 2\sqrt{\delta}$, and so $(\rho^2-1)|d_p(f^{-1})\eta|^2+(|d_p (f^{-1}) \eta|^2-|d_x (f^{-1}) \xi|^2)\geq (\rho^2-1)(1-\delta)-\delta\geq (\rho^2-1)(1-\delta)-(\rho^2-1)\frac{\delta}{2\sqrt{\delta}}=(\rho^2-1)(1-\delta-\frac{\sqrt{\delta}}{2})\geq(\rho^2-1)(1-\frac{3}{2}\sqrt{\delta})\geq(\rho^2-1)e^{-2\sqrt{\delta}}$, for small enough $\delta\in(0,1)$. We then get that all together,
\begin{align*}
	\frac{S^2(f^{-1}(x),d_x(f^{-1})\xi)}{S^2(p,d_p(f^{-1})\eta)}\leq&\rho^2-\frac{2(\rho^2-1)e^{-2\sqrt{\delta}}}{S^2(p,d_p(f^{-1})\eta)}\leq \rho^2-\frac{2(\rho^2-1)e^{-2\sqrt{\delta}}}{\|C_\chi^{-1}(p)\|\cdot |d_p(f^{-1})\eta|}\leq \rho^2-\frac{2(\rho^2-1)e^{-2\sqrt{\delta}}}{\|C_\chi^{-1}(p)\|(1+\delta)}\\
	\leq&\rho^2\left(1-\frac{2(1-\frac{1}{\rho^2})e^{-2\sqrt{\delta}}}{\|C_\chi^{-1}(p)\|e^{\delta}}\right)\leq \rho^2\left(1-\frac{2(1-\frac{1}{e^{2\sqrt{\delta}}})e^{-2\sqrt{\delta}}}{\|C_\chi^{-1}(p)\|e^{\delta}}\right)\leq \rho^2\left(1-\frac{2\sqrt{\delta}e^{-2\sqrt{\delta}}}{\|C_\chi^{-1}(p)\|e^{\delta}}\right). 
\end{align*}
For $\delta\in(0,1)$ small enough so $\frac{\sqrt{\delta}^{-\frac{1}{2}}e^{-1}}{\|C_\chi^{-1}(p)\|}\geq 1$, 
$$\frac{S^2(f^{-1}(x),d_x(f^{-1})\xi)}{S^2(p,d_p(f^{-1})\eta)}\leq \rho^2(1-2\delta)\leq\rho^2e^{-2\delta}.$$
Since both $\zeta$ and $\eta$ depend continuously on $\xi$, and can be made arbitrarily close with $\delta>0$ small enough, and since $S^2(p,\cdot)$ is continuous (see \cite[Theorem~2.8]{SBO}), we may assume w.l.o.g. that $\frac{S^2(p,\zeta)}{S^2(p,d_p(f^{-1})\eta)}\in [e^{-\delta}, e^{\delta}]$. Thus we get in total,
\begin{equation}\label{laurasleeps}
\frac{S^2(f^{-1}(x),d_x(f^{-1})\xi)}{S^2(p,\zeta)}\leq \rho^2e^{-\delta}.	
\end{equation}
Similarly one obtains $\frac{S^2(f^{-1}(x),d_x(f^{-1})\xi)}{S^2(p,\zeta)}\geq \rho^{-2}e^{\delta}$. 

\medskip
Step 2: In step 1, we fixed $\delta>0$, and made some assumptions without losing generality, that hold for all $f^{-n}(x)$ for $n\geq0$ large enough, and hence don't affect the limit. Denote by $n_\delta\geq0$ the smallest backward-iteration of $x$ to satisfy this way the choice of $\delta$ (thus step 1 treats all $f^{-n}(x)$ $\forall n\geq n_\delta$). That means that the ratio in equation \eqref{laurasleeps} is either in $[e^{-\sqrt{\delta}}, e^{\sqrt{\delta}}]$, or it improves by a factor of at least $e^{\delta}$, with each iteration of $f^{-1}$ (starting from $f^{-n_\delta}(x)$). On the other hand, when $1\leq\rho\leq e^{\sqrt{\delta}}$, by equation \eqref{fridaynoon}, the bound may deteriorate by a factor of at most $e^{2\delta}$. Thus,
$$\limsup_{n\rightarrow\infty} \frac{S^2(f^{-n}(x),d_x(f^{-n})\xi)}{S^2(p,\eta(d_x(f^{-n})\xi))}\leq e^{\sqrt{\delta}+2\delta}, \liminf_{n\rightarrow\infty} \frac{S^2(f^{-n}(x),d_x(f^{-n})\xi)}{S^2(p,\eta(d_x(f^{-n})\xi))}\geq e^{-\sqrt{\delta}-2\delta}.$$
Now, since $\delta>0$ was arbitrary, we get $\forall\xi\in H^s(x)\setminus\{0\}$,
\begin{equation}\label{snirsleeps}
\lim_{n\rightarrow\infty} \frac{S^2(f^{-n}(x),d_x(f^{-n})\xi)}{S^2(p,\eta\left(d_x(f^{-n})\xi\right))}=1\text{, and 
so } \lim_{n\rightarrow\infty} \frac{S^2\left(f^{-n}(x),\frac{d_x(f^{-n})\xi}{|d_x(f^{-n})\xi|}\right)}{S^2\left(p,\eta(\frac{d_x(f^{-n})\xi}{|d_x(f^{-n})\xi|}) \right)}=1.	
\end{equation}
From that, since $\lim\limits_{n\rightarrow\infty}|\eta(\frac{d_x(f^{-n})\xi)}{|d_x(f^{-n})\xi)|})|=1$ (by the Inclination Lemma and the definition of $\eta:T_{f^{-n}(x)}V^s(f^{-n}(x))\rightarrow H^s(p)$), the lemma follows.
\end{proof}
The reason that we got a better result here, than \cite[Lemma~4.6]{SBO} and \cite[Lemma~7.2]{Sarig}, is that here the centers of charts are $f^{-i}(p)$, and there is no distortion as a consequence of non-full overlap between $f^{-1}(x_i)$ and $x_{i-1}$.
\begin{definition}
	$\forall x\in \HWT_\chi^\star=\bigcupdot\mathcal{R}$, {\em the itinerary of $x$} is $\ul{R}(x):=(R(f^i(x)))_{i\in\mathbb{Z}}$, where $R(f^i(x)):=$unique element of $\mathcal{R}$ which contains $f^i(x)$.
\end{definition}
Notice that $\ul{R}(\cdot):\HWT_\chi^\star\rightarrow \Sig$ is a bijection onto its image and is a measurable map s.t. $\ul{R}\circ f=\sigma\circ \ul{R}$ and $\widehat{\pi}\circ \ul{R}=\mathrm{Id}$ (in particular $\forall x\in \HWT_\chi^\star$, $\ul{R}(x)$ is an admissible chain in $\Sig^\#$, as demonstrated in the proof of Proposition \ref{imagecanonic}).
\begin{prop}\label{irreduciblecoding}
For every $\chi$-hyperbolic periodic point $p$, $\exists\tilde{\Sigma}\subseteq\Sig$ a maximal irreducible component, s.t. $\widehat{\pi}[\tilde{\Sigma}]\supseteq H(p)$ modulo all conservative (possibly infinite) measures which are carried by $H(p)$.
\end{prop}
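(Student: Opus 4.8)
\emph{Overview of the plan.} The component will be the evident one. Since $p$ is a $\chi$-hyperbolic periodic point, $o(p)$ is finite, so $\|C_\chi^{-1}(\cdot)\|$ is bounded along $o(p)$ and $p\in\HWT_\chi^\star$ (cf.\ the opening of this section); let $R_0:=R(p)\in\mathcal R$. The periodic itinerary $\ul R(p)$ is a cycle through $R_0$, so $R_0\sim R_0$ and $\tilde\Sigma:=\{\ul R\in\Sig:\ul R\in\langle R_0\rangle^{\mathbb Z}\}$ is a maximal irreducible component (it is the component produced in \cite{BCS}). Since $H(p)\subseteq\HWT_\chi^\star=\widehat\pi[\Sig^\#]$ (Corollary \ref{cafeashter}) and $\widehat\pi\circ\ul R=\mathrm{Id}$ on $\HWT_\chi^\star$, it will suffice to show that every conservative $f$-invariant (possibly infinite) $\mu$ with $\mu(M\setminus H(p))=0$ has $\ul R(x)\in\tilde\Sigma$ for $\mu$-a.e.\ $x$; for then $x=\widehat\pi(\ul R(x))\in\widehat\pi[\tilde\Sigma]$.

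\emph{Step 1: recurrence localizes the itinerary.} Fix such a $\mu$. Since $\mathcal R$ is countable and $\HWT_\chi^\star=\bigcupdot\mathcal R$ is $\mu$-conull, Definition \ref{conservativeMeasures} applied to each $R\in\mathcal R$ gives, for $\mu$-a.e.\ $x$ with $R:=R(x)$, integers $n_k,m_k\uparrow\infty$ with $f^{n_k}(x),f^{-m_k}(x)\in R$, i.e.\ $\ul R(x)$ has the letter $R$ at positions $n_k$ and $-m_k$. For any $j\in\mathbb Z$ choose $k$ with $-m_k<j<n_k$; the segments of $\ul R(x)$ over $[-m_k,j]$ and over $[j,n_k]$ are paths $R\xrightarrow{*}R(f^j(x))$ and $R(f^j(x))\xrightarrow{*}R$, so $R(f^j(x))\sim R$. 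Hence $\mu$-a.e.\ $\ul R(x)\in\langle R(x)\rangle^{\mathbb Z}$, i.e.\ $\ul R(x)$ lies in the maximal irreducible component of $R(x)$, and it remains to prove $R(x)\sim R_0$ for $\mu$-a.e.\ $x\in H(p)$.

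\emph{Step 2: the homoclinic relation is a combinatorial connection.} Let $x\in H(p)$, so $W^s(x)\pitchfork W^u(o(p))\neq\varnothing$ and $W^u(x)\pitchfork W^s(o(p))\neq\varnothing$. Fix a transverse point of the first intersection. Forward iterates of the local unstable manifolds of the periodic admissible chain $\ul u:=\{\psi_{f^n(p)}^{p^s_n(p),p^u_n(p)}\}_{n\in\mathbb Z}$ exhaust $W^u(o(p))$, so applying a suitable power of $f$ I may assume this point is a transverse heteroclinic point $y$ lying on $V^u(\sigma^c\ul u)$ for some $c$ and on a local stable manifold of a point of the orbit of $x$; transversality forces $\dim H^s(y)=\dim H^s(p)$. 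Next I would check $y\in\HWT_\chi^\star$: Lemma \ref{ForUniformHyperbolicity} bounds $\limsup_n\sup_{\xi\in H^s(f^{-n}(y)),|\xi|=1}S(f^{-n}(y),\xi)$ by the hyperbolicity of $o(p)$, so $y\in\chi\mathrm{-summ}$; and since $Q_\epsilon(\cdot)$ is bounded below along $o(p)$ while $x\in\HWT_\chi^\star$ governs the forward/stable side, $y$ is $\epsilon$-weakly temperable. Then $\ul R(y)\in\Sig^\#$ is defined, and since $y\in V^u(\sigma^c\ul u)$ the negative half of the chain of double Pesin-charts shadowing $y$ equals the periodic chain of $p$, while its positive half shadows the orbit of $x$. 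As in \cite[\S 12]{Sarig} and \cite{BCS} --- using Corollary \ref{localfinito} (finiteness of affiliation) and Step 1 (applied along $o(p)$ and the $\mu$-generic orbit of $x$) to pass from affiliation of partition members sharing a $\Zista$-set to the relation $\sim$ --- this yields a path in $\mathcal R$ from a member of $\langle R_0\rangle$ to $R(x)$. The symmetric argument applied to $W^u(x)\pitchfork W^s(o(p))\neq\varnothing$ yields a path from $R(x)$ back to $\langle R_0\rangle$. Hence $R(x)\sim R_0$, and by Step 1, $\ul R(x)\in\langle R_0\rangle^{\mathbb Z}=\tilde\Sigma$.

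\emph{The main obstacle.} All the difficulty is in Step 2, in turning the two transversality conditions into honest finite paths in the graph of $\mathcal R$. Two things must be controlled: first, the heteroclinic point $y$ must be recurrently codable, not merely $\chi$-hyperbolic --- this is exactly what Lemma \ref{ForUniformHyperbolicity} secures, by holding the stable hyperbolicity bounded along the backward part of $y$'s orbit, which asymptotes to $o(p)$; second, one must strengthen ``affiliation'' of partition members lying in one $\Zista$-set to the relation $\sim$, where the finiteness of Corollary \ref{localfinito} and the recurrence of Step 1 are combined as in \cite{BCS}. The reduction, Step 1, and the verification that $\tilde\Sigma$ is a maximal irreducible component are routine.
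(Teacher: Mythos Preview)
Your approach is genuinely different from the paper's, and Step 2 contains a real gap.

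The paper does \emph{not} fix $\tilde\Sigma$ as $\langle R(p)\rangle^{\mathbb Z}$ in advance, and does not attempt to show the canonical itinerary $\ul R(x)$ lies in the chosen component. Following \cite{BCS}, it first restricts to $\Sig^{\#\#}$ (a single symbol recurring in both directions), approximates each such chain by periodic chains, takes heteroclinic points only between \emph{periodic} points (which are $\chi$-hyp by Lemma \ref{avecOmri}), assembles from these a uniformly hyperbolic compact set, and applies the Shadowing Lemma to produce a transitive orbit whose finitely-lettered coding singles out an irreducible component. The finite-to-one property of $\widehat\pi$ then forces a single component $\tilde\Sigma$ to work for all the periodic approximants simultaneously, and a limit argument (together with the finiteness of affiliation) shows $\tilde\Sigma$ contains \emph{some} coding --- not necessarily $\ul R(x)$ --- of each $x\in H(p)\cap\widehat\pi[\Sig^{\#\#}]$.

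Your gap is the passage from ``$y$'s backward orbit asymptotes to $o(p)$ and its forward orbit asymptotes to the orbit of $x$'' to ``$\ul R(y)$ visits both $\langle R_0\rangle$ and $\langle R(x)\rangle$.'' Even if $y\in V^u(\sigma^c\ul u)$ for $\ul u$ the periodic Pesin-chart chain of $p$, all you obtain is that $R(f^{-n}(y))$ and $R(f^{c-n}(p))$ are both contained in $\Zista_\epsilon(u_{c-n})$, i.e.\ they are \emph{affiliated}; there is no reason the Bowen--Sinai atoms ever coincide or lie in the same $\sim$-class. The refinement $\mathcal R$ may separate $f^{-n}(y)$ from $f^{c-n}(p)$ for every $n$, and affiliated rectangles need not be $\sim$-related. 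The forward side is worse: $y$'s forward orbit merely shadows $x$'s orbit, and there is no mechanism forcing $\ul R(y)$ to pick up a symbol of $\ul R(x)$. You acknowledge this obstacle and say it is handled ``as in \cite{BCS}'', but the argument in \cite{BCS} \emph{is} precisely the periodic-approximation/shadowing construction the paper reproduces; it does not furnish a direct bridge from affiliation to $\sim$. A secondary but nontrivial issue: your heteroclinic point connects $p$ to a generic $x$ that is only in $\chi$-summ, not $\chi$-hyp, so the use of \cite[Lemma~4.5]{SBO} to place $y$ in $\chi$-summ on the forward side (and hence in $\HWT_\chi^\star$) is not immediate --- the paper avoids this entirely by only forming heteroclinic connections between periodic orbits.
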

This is an adaptation of the proof by Buzzi, Crovisier and Sarig in \cite[Lemma~3.11,Lemma~3.12]{BCS}.
\begin{proof} Let $\mu$ be a conservative measure carried by $H(p)$. By Corollary \ref{cafeashter} and Proposition \ref{imagecanonic} $H(p)\subseteq \widehat{\pi}[
\Sig^\#]=\bigcupdot\mathcal{R}$, and so $\widehat{\mu}:=\mu\circ \underline{R}^{-1}$ is a well defined invariant and conservative measure on $\Sig^\#$, and $\widehat{\mu}\circ\widehat{\pi}^{-1}=\mu$. Whence $\widehat{\mu}$ is carried by 
$$\Sig^{\#\#}:=\left\{\ul{R}\in\Sig^\#:\exists a\in\mathcal{R}\text{ s.t. }R_i=a\text{ for infinitely many }i\geq0\text{ and for infinitely many }i\leq0\right\}.$$
Therefore, $\widehat{\pi}[\Sig^{\#\#}]\cap H(p)$ carries all conservative measures which are carried by $H(p)$. Each chain in $\Sig^{\#\#}$ has the following form:
\begin{equation}\label{periodicform}
    ...a,\tilde{p}_{-i},a,\tilde{p}_{-i+1},a...a,\tilde{p}_{-1},a...a,\tilde{p}_1,a...a,\tilde{p}_{i-1},a...a,\tilde{p}_i,a...
\end{equation}
where $\tilde{p}_{i'}$ denotes a finite word which connects $a$ to $a$. For each word $w_{n}:=a,\tilde{p}_{-n},a, \tilde{p}_{-n+1},a,...,a, \tilde{p}_{n-1},a, \tilde{p}_{n}a$, let $x_{\ul{R}}^{(n)}$ be the image of the periodic extension of $w_n$.

\textbf{Step 0:} For every chain $\ul{R}$ as in equation \eqref{periodicform}, let $\ul{R}^{(n)}:=$the admissible concatenation of $w_n$ to itself
. Then $\ul{R}^{(n)}\rightarrow\ul{R}$. As demonstrated in the proof of Proposition \ref{imagecanonic}, $\exists \ul{u}^{(n)},\ul{u}\in \Sigma^\#$ s.t. $\ul{u}^{(n)}\rightarrow \ul{u}$ and $\pi(\ul{u}^{(n)})=\widehat{\pi}(\ul{R}^{(n)}), \pi(\ul{u})=\widehat{\pi}(\ul{R})$. It follows that $V^u(\ul{u}^{(n)})\rightarrow V^u(\ul{u}),V^s(\ul{u}^{(n)})\rightarrow V^s(\ul{u})$ as admissible manifolds in $u_0$ (i.e. the representing functions converge in $\|\cdot\|_\infty$-norm). Since $\pi(\ul{u})=\widehat{\pi}(\ul{R})\in H(p)$, $\exists N=N_{\ul{R}}$ s.t. $f^N[V^u(\sigma^{-N}\ul{u})]\pitchfork W^s(o(p))\neq\varnothing$, $f^{-N}[V^s(\sigma^{N}\ul{u})]\pitchfork W^s(o(p))\neq\varnothing$. Whence, $\exists n_{\ul{R}}$ s.t. $\forall n\geq n_{\ul{R}}$, $f^N[V^u(\sigma^{-N}\ul{u}^{(n)})]\pitchfork W^s(o(p))\neq\varnothing$, $f^{-N}[V^s(\sigma^{N}\ul{u}^{(n)})]\pitchfork W^s(o(p))\neq\varnothing$. Let $P_{\ul{R}}:=\{x_{\ul{R}}^{(n)}\}_{|n|>n_{\ul{R}}}$. Then $\forall n\geq n_{\ul{R}}$, $\widehat{\pi}(\ul{R}^{(n)})\in H(p)$, and $W^s(\widehat{\pi}(\ul{R}^{(n)}))=W^s(x_{\ul{R}}^{(n)})$, $W^u(\widehat{\pi}(\ul{R}^{(n)}))=W^u(x_{\ul{R}}^{(n)})$; therefore $P_{\ul{R}}\subseteq H(p)$.

Consider the countable collection of all periodic points generated in this manner $\{p_i\}_{i\geq0}=\bigcup\{P_{\ul{R}}:\ul{R}\in \Sig^{\#\#}\cap\widehat{\pi}^{-1}[\{x\}],x\in H(p)\}$ ($\subseteq \HWT_\chi^\star$, Theorem \ref{BMS}). Then by the transitivity of the homoclinic relation (\cite[Proposition~2.1]{Newhouse}), $\forall i,j\geq0$, $p_i\in H(p_j)$. Assume w.l.o.g. that $\exists N_l\uparrow\infty$ s.t. $\forall l\in \mathbb{N},\forall i\leq N_l$, $o(p_i)\subseteq\{p_j\}_{j\leq N_l}$. Fix $N\in\{N_l\}_{l\geq0}$.

\textbf{Step 1:} $\forall 0\leq i,j\leq N$ $\exists t_{ij}\in \Big(W^u(p_i)\pitchfork W^s(p_j)\Big)\cap H(p)$, and $t_{ij}$ has a uniformly hyperbolic orbit, and its coding involves finitely many symbols.

\textbf{Proof:} 
Since $p_i\in H(p_j)$, $\exists t_{ij}\in W^u(p_i)\pitchfork W^s(p_j)$. Showing that $t_{ij}$ has a uniformly hyperbolic orbit would yield that $t_{ij}\in \HWT_\chi^\star$, and so, since $p_i,p_j\in H(p)$, also $t_{ij}\in H(p)$.
By Lemma \ref{avecOmri}, $p_i,p_j\in \chi\mathrm{-hyp}$; whence, by \cite[Lemma~4.5]{SBO}, $t_{ij}\in \chi\mathrm{-hyp}$. By the Inclination Lemma (\cite[Theorem~5.7.2]{BrinStuck}), the angle between $W^s(t_{ij})$ and $W^u(t_{ij})$ is bounded away from zero along the orbit of $t_{ij}$. Therefore, by \cite[Lemma~4.5]{SBO} and by Lemma \ref{ForUniformHyperbolicity}, $\{\|C_\chi^{-1}(f^k(t_{ij}))\|\}_{k\in\mathbb{Z}}$ is bounded along the orbit of $t_{ij}$. Thus, $t_{ij}\in \HWT_\chi^\star$ and can be coded with finitely many symbols.\footnote{\label{finitelymanyletters}By \cite[Proposition~4.5]{Sarig}, there exists a $\ul{u}=\{\psi_{x_i}^{p^s_i,p^u_i}\}_{i\in\mathbb{Z}}\in\Sigma^\#$ s.t. $\pi(x_i)$ $\frac{Q_\epsilon(x_i)}{Q_\epsilon(f^i(x))}=e^{\pm\frac{\epsilon}{3}}$ $\forall i\in\mathbb{Z}$, and $p_i^u,p_i^s$ are given by a formula s.t. if $\inf_{n\in\mathbb{Z}}\{Q_\epsilon(x_n)\}>0$, then $\inf_{n\in\mathbb{Z}}\{p^s_n,p^u_n\}>0$; whence by the discreteness and local-finiteness of $\mathcal{Z}_\epsilon$ (Definition \ref{changesToCoding}(5), Definition \ref{Doomsday}(2)(b)), there are finitely many possible letters in that coding.}

\textbf{Step 2:} Let $\{t_{ij}\}_{i,j\leq N}$ be as defined in step 1, and choose some $\ul{\zeta}_{ij}\in \widehat{\pi}^{-1}[\{t_{ij}\}]$ (which involves only finitely many symbols). As defined before step 0, each point $p\in \{p_i\}_{i\geq0}$ is the image of a periodic extension of a finite word $w(p)=a,\widetilde{p},a$; in the following definition $\ul{w}(p_i)$ is the periodic extension of $w(p_i)	$ which induces $p_i$, $0\leq i\leq N$.
\begin{align*}
\tilde{L}:=\{&
\ul{w}(p_i):0\leq i\leq N\}\bigcup\{\ul{\zeta}_{ij}\}_{i,j\leq N}.
\end{align*}
$\tilde{L}$ is finite and involves only finitely-many symbols.

\textbf{Step 3:}
$$\text{Define }L:=\bigcup_{\ul{u}\in\tilde{L}}\{\sigma^j\ul{u}\}_{j\in\mathbb{Z}}.$$
By the H\"older-continuity of $\widehat{\pi}$, it follows that $\forall y\in \widehat{\pi}[L]$, $\exists j^+(y),j^-(y)\leq N$ s.t. $\lim_{n\rightarrow\infty}d(f^{-n}(y),f^{-n}(p_{j^-(y)}))=0,\lim_{n\rightarrow\infty}d(f^{n}(y),f^{n}(p_{j^+(y)}))=0$. Therefore, $\widehat{\pi}[L]$ is compact, $f$-invariant, and $\chi'$-uniformly hyperbolic for some $\chi'>\chi$ (by the proof of step 1).

\textbf{Step 4:} We now follow the argument in \cite[Lemma~3.12]{BCS}: By the Shadowing Lemma, there are $\epsilon'>0,\delta>0$ s.t. 
\begin{enumerate}
    \item Every $\epsilon'$-pseudo-orbit in $L^\mathbb{Z}$ is $\delta$-shadowed by at least one real orbit \cite[Theorem~18.1.2]{KatokBook}.
    \item Every $\epsilon'$-pseudo-orbit in $L^\mathbb{Z}$ is $2\delta$-shadowed by at most one orbit by expansivity, see \cite[Theorem~18.1.3]{KatokBook} (in particular every orbit as in the first item is unique).
\end{enumerate}
Since $\tilde{L}$ is finite, there is some $m\geq0$ large enough so $d(f^m(y),f^m(p_{j^+(y)}))<\frac{\epsilon'}{2}$,$d(f^-m(y),f^{-m}(p_{j^-(y)}))<\frac{\epsilon'}{2}$, $\forall y\in \tilde{L}$. Let $L_m:=\bigcup_{j=-m}^m\{f^j(y):y\in \widehat{\pi}[\tilde{L}]\}$, which is also finite. Let $$K:=\{x\in M:\text{the orbit of }x\text{ is }\delta\text{-shadowed by an }\epsilon'\text{-pseudo-orbit in }L_m^\mathbb{Z}\}.$$ 
This set contains $\tilde{L}$, and since $L_m$ is finite, it is also closed. It is also invariant and uniformly $\chi'$-hyperbolic for some $\chi'>\chi$ (whence $\subseteq \HWT_\chi^\star$). We construct a point in $K$ with a dense forward-orbit in $K$ in the following way: take a list of all admissible finite words $\{\omega_i\}_{i\geq0}$ with letters in $L_m$.\footnote{That is, $\omega=a_1,...,a_l$ is admissible if $\forall 1\leq i\leq l-1$, $d(f(a_i),a_{i+1})\leq\epsilon'$.} Each $y\in L_m$ connects by such admissible word of length at most $m$, to some periodic point $p_{j^+(y)}\in L_m$, and has some periodic point $p_{j^{-}(y)}$ connecting to it by an admissible word of length at most $m$. For each two periodic points $p_i,p_j\in L_m$,  $p_i$ connects to $p_j$ by an admissible word of length at most $2m$ through $t_{ij}$
. Therefore, every two admissible finite words $\omega,\omega'$ of letters in $L_m$ can be concatenated by some admissible finite word of letters in $L_m$. Concatenate this way all words in $\{\omega_i\}_{i\geq0}$, and take any admissible continuation
to the past. This yields an $\epsilon'$-pseudo-orbit, and the unique orbit in $K$ it $\delta$-shadows must be dense in $K$ by expansivity.\footnote{It can be seen by \cite[Lemma~3.13]{B4}: the mapping $\tau$ which maps each $\epsilon'$-pseudo-orbit to its uniquely $\delta$-shadowed orbit is a continuous map (the topology on the space of pseudo-orbits is the metric topology generated by cylinders); and in addition $\tau\circ\sigma=f\circ\tau$, where $\sigma$ denotes the left-shift on $\epsilon'$-pseudo-orbits. Thus, shadowing longer intervals of the orbit of $x$ forces being closer to it.} Denote this orbit by $o(x)$. 

\textbf{Step 5:} The orbit of $x$ lies in $K$, which is an invariant  $\chi'$-uniformly hyperbolic set for some $\chi'>\chi$ (whence $\|\tilde{C}_\chi^{-1}(\cdot)\|$ is uniformly bounded on $K$), and by the same argument as in footnote \ref{finitelymanyletters}, $x$ has a pre-image in $\Sig^\#$ which involves only finitely many symbols. Choose one pre-image as such, and denote it by $\ul{v}$. There exists at least one symbol $v'$ such that the forward-orbit of $\ul{v}$ visits its cylinder $[v']$ infinitely often. Let $\ul{v}^+:=\sigma^{\min\{i\geq0:\sigma^i\ul{v}\in[v']\}}\ul{v}$, and let $\ul{v}^-$ be some periodic chain in $[v']$. Define 
$
       v_i':=\left\{
  \begin{array}{@{}ll@{}}
    v^+_i, & \text{if}\ i\geq 0\\
    v^-_i, & \text{if}\ i\leq 0
  \end{array}\right.
$, and write $x':=\widehat{\pi}(\ul{v}')$. Then the forward orbit of $x'$ is dense in $K$, and $\ul{v}'\in\tilde{\Sigma}_N:=\{\ul{u}\in \Sig: \ul{u}\in \langle v'\rangle^\mathbb{Z}\}$ (recall Definition \ref{irreducibility}). $\tilde{\Sigma}_N$ is a maximal irreducible component of $\Sig$ containing a compact set which contains the orbit of $\ul{v}'$. For each $y\in \tilde{L}\subseteq K$, the orbit of $x'$ has a subsequence converging to it. Since $\ul{v}'$ is made of finitely-many letters, the orbit of $\ul{v}'$ belongs to a compact subset of $\tilde{\Sigma}_N$. Therefore, the subsequence $\{f^{n_k}(x')\}_{k\geq0}$ which converges to $y$, has a subsubsequence s.t. $\{\sigma^{n_{k_j}}\ul{v}'\}_{j\geq0}$ converges as well. By the continuity of $\widehat{\pi}$, that limit must code $y$. Therefore $\widehat{\pi}[\tilde{\Sigma}_N]\supseteq \tilde{L}$, and moreover, each term in $L$ has a pre-image in $\tilde{\Sigma}_N$ made of finitely-many symbols.

\textbf{Step 6:} For each $N\in \{N_l\}_{l\geq0}$, $p_0\in \widehat{\pi}[\tilde{\Sigma}_N]$. Since $\widehat{\pi}|_{\Sig^\#}$ is finite-to-one, there could be only finitely many maximal irreducible components in $\Sig$, which can code $p_0$. Therefore, there is some subsequence of the maximal irreducible components from step 5, $\tilde{\Sigma}_{N_{l_j}},l_j\uparrow\infty$ which is constant. Denote this component by $\tilde{\Sigma}$. For each fixed $N\in \{N_l\}_{l\geq0}$, we constructed a set $\tilde{L}$, and all such sets must be covered by $\widehat{\pi}[\tilde{\Sigma}]$.

\textbf{Step 7:} Given a point $z\in H(p)\cap \widehat{\pi}[\Sig^{\#\#}]$, consider its coding as in equation \eqref{periodicform}. This coding has a sequence of 
finitely-lettered chains which converge to it 
in $\tilde{L}$
. These 
chains all have their images coded by a chain in $\tilde{\Sigma}$, and they all belong to the same cylinder of their zeroth symbol, denoted by $b$. The corresponding coding chains in $\tilde{\Sigma}$ (which have been shown in step 5 to be made of finitely-many letters, and as such lie in $\Sig^\#$) all belong to a cylinder from $\{[S]\in\mathcal{R}:b\cap\widehat{\pi}[[S]\cap\Sig^\#]\neq\varnothing\}$, which is a finite collection. Thus they have a converging subsequence with a limit in $\tilde{\Sigma}$ (since it is a closed set). By the continuity of $\widehat{\pi}$, that limit must code $z$, and so $\widehat{\pi}[\tilde{\Sigma}]\supseteq H(p)\cap\widehat{\pi}[\Sig^{\#\#}]=H(p)$ modulo all conservative measures which are carried by $H(p)$.
\end{proof}

In \cite[Lemma~3.13]{BCS}, the authors offer a technique of lifting invariant probability measures which are carried by $H(p)$ to an irreducible coding. Their technique involves restriction to points which are generic w.r.t Birkhoff's ergodic theorem, which we may not always be able to do if the measure we wish to lift is infinite. The lifting is being done by the formula $\widehat{\mu}:=\int\frac{1}{|N_x|}\sum_{\ul{R}\in N_x}\delta_{\ul{R}}d\mu(x)$, where $N_x:=\widehat{\pi}^{-1}[\{x\}]\cap \widetilde{\Sigma}^\#$ and $\tilde{\Sigma}^\#:=\{\ul{u}\in\tilde{\Sigma}:\exists v,w\text{ s.t. }\#\{i>0:u_i=v\}=\infty,\text{ and }\#\{i<0:u_i=w\}=\infty\}$. When $\mu$ is carried by $\widehat{\pi}[\widetilde{\Sigma}^\#]$, this lifting is well defined since $\widehat{\pi}$ is finite-to-one on $\Sig^\#$. So, we are required to find a different way which does not depend on generic points to show that all conservative measures which are carried by $H(p)$ are also carried by $\widehat{\pi}[\widetilde{\Sigma}^\#]$. Proposition \ref{irreduciblecoding} was written in a way to guarantee that we can do that. In addition, \cite[Lemma~3.13]{BCS} is done for $HO(p):=\overline{\{q\in H(p):q\text{ is periodic}\}}$ as the homoclinic class which is being lifted to an irreducible component (modulo all invariant ergodic probability measures carried by it). Both Proposition \ref{irreduciblecoding} and Theorem \ref{homoclinicirreducible} are done in the context of ergodic homoclinic classes, which is relevant to specific objects (e.g. SRB measures on ergodic homoclinic classes, see \cite{RodriguezHertz}).

\begin{theorem}\label{homoclinicirreducible}
Let $p$ be a $\chi$-hyperbolic periodic point. Let $\tilde{\Sigma}$ be the irreducible TMS which we construct in Proposition \ref{irreduciblecoding} to cover $H(p)$. Then, $\widehat{\pi}[\tilde{\Sigma}^\#]= H(p)$ modulo all conservative measures which are carried by $H(p)$.
\end{theorem}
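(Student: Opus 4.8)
The plan is to use Proposition~\ref{irreduciblecoding} together with one soft new ingredient. Since every measure carried by $H(p)$ assigns zero mass to $\widehat{\pi}[\tilde{\Sigma}^\#]\setminus H(p)$, and since $\widehat{\pi}[\tilde{\Sigma}^\#]\subseteq\widehat{\pi}[\tilde{\Sigma}]$ with $\widehat{\pi}[\tilde{\Sigma}]\supseteq H(p)$ modulo conservative measures (Proposition~\ref{irreduciblecoding}), it suffices to prove that every conservative measure carried by $H(p)$ is in fact carried by $\widehat{\pi}[\tilde{\Sigma}^\#]$. The ingredient that lets us do this without restricting to Birkhoff-generic points (so that it also applies to infinite measures, unlike \cite[Lemma~3.13]{BCS}) is the recurrence lemma: \emph{any conservative, $\sigma$-invariant measure $\nu$ on a countable topological Markov shift $\Sigma$ is carried by $\Sigma^\#$.} Indeed, if $\nu$ gave positive mass to the $\sigma$-invariant set $\Sigma\setminus\Sigma^\#$, then, as this set is a countable union of its intersections with cylinders $[v]$, some $A:=[v]\cap(\Sigma\setminus\Sigma^\#)$ has $\nu(A)>0$; applying the Poincar\'e recurrence property from Definition~\ref{conservativeMeasures} to $A$ shows that $\nu$-a.e.\ point of $A$ returns to $A$ -- hence to $[v]$ -- for arbitrarily large positive and negative times, so it lies in $\Sigma^\#$, contradicting $A\cap\Sigma^\#=\varnothing$. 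Applied to the irreducible sub-shift $\tilde{\Sigma}$ of $\Sig$, and since $\tilde{\Sigma}^\#=\tilde{\Sigma}\cap\Sig^\#$, this gives: every conservative $\sigma$-invariant measure on $\tilde{\Sigma}$ is carried by $\tilde{\Sigma}^\#$.

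Next, the lift. Fix a conservative measure $\mu$ carried by $H(p)$. By Corollary~\ref{cafeashter} and Proposition~\ref{imagecanonic}, $H(p)\subseteq\HWT_\chi^\star=\bigcupdot\mathcal{R}$, so by the properties of the itinerary map $\ul{R}(\cdot)$ recorded after its definition (a measurable bijection onto its image, with measurable inverse $\widehat{\pi}$, intertwining $f$ with the shift) the measure $\widehat{\mu}:=\mu\circ\ul{R}^{-1}$ is a conservative $\sigma$-invariant measure on $\Sig$ with $\widehat{\mu}\circ\widehat{\pi}^{-1}=\mu$. By the recurrence lemma it is enough to show that $\widehat{\mu}$ is carried by $\tilde{\Sigma}$: then $\widehat{\mu}$ is carried by $\tilde{\Sigma}^\#$ and $\mu=\widehat{\mu}\circ\widehat{\pi}^{-1}$ is carried by $\widehat{\pi}[\tilde{\Sigma}^\#]$. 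So the entire matter reduces to showing: \emph{for $\mu$-a.e.\ $x\in H(p)$, $\ul{R}(x)\in\tilde{\Sigma}$.}

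To prove this, first apply Definition~\ref{conservativeMeasures} to each partition element $R\in\mathcal{R}$ separately and take the countable union of the exceptional sets: for $\mu$-a.e.\ $x$ the symbol $R(x)$ occurs in $\ul{R}(x)$ infinitely often in the future and the past, hence $R(x)\sim R(x)$, every symbol occurring in $\ul{R}(x)$ is $\sim$-equivalent to $R(x)$, and $\ul{R}(x)$ lies in the maximal irreducible component $\langle R(x)\rangle^{\mathbb{Z}}$. Writing $v'$ for the symbol with $\tilde{\Sigma}=\langle v'\rangle^{\mathbb{Z}}$, it remains to show $R(x)\sim v'$ for $\mu$-a.e.\ $x$, which I would do in two steps. (i) $R(p)\sim v'$: replacing $p$ by a homoclinically related periodic point does not change $H(p)$ (as in \cite{RodriguezHertz}), so we may assume $o(p)$ avoids the closed nowhere-dense boundary set $\bigcup_{R\in\mathcal{R}}(\overline{R}\setminus R)$; then, since any $\Sig^\#$-coding $\ul{R}$ of $o(p)$ satisfies $f^n(p)\in\overline{R_n}$ for all $n$ and $f^n(p)$ avoids the boundary, in fact $f^n(p)\in R_n$, forcing $\ul{R}=\ul{R}(p)$, i.e.\ $o(p)$ has a unique $\Sig^\#$-coding; on the other hand the construction in Proposition~\ref{irreduciblecoding} may be arranged so that $p$ is the point $p_0$ of the collection $\{p_i\}$ constructed there (it is generated from its own canonical periodic coding $\ul{R}(p)\in\Sig^{\#\#}$, whose truncations reproduce $p$), and Step~5 of that proof produces a coding of $p_0$ inside $\tilde{\Sigma}$ made of finitely many symbols, hence in $\Sig^\#$, so by uniqueness this coding is $\ul{R}(p)$ and $\ul{R}(p)\in\tilde{\Sigma}$. (ii) $R(x)\sim R(p)$ for $\mu$-a.e.\ $x$: such $x$ has $\ul{R}(x)\in\Sig^\#$, and the transverse intersections $W^u(x)\pitchfork W^s(o(p))\neq\varnothing$, $W^s(x)\pitchfork W^u(o(p))\neq\varnothing$ of Definition~\ref{homoclinicclass} yield, via the local product structure of the coding, admissible paths in $\mathcal{R}$ from a symbol of $\ul{R}(x)$ to a symbol of $\ul{R}(p)$ and back -- the relevant heteroclinic points of $x$ and $o(p)$ lie in $\HWT_\chi^\star$ by arguments like those in Lemma~\ref{ForUniformHyperbolicity} and \cite[Lemma~4.5]{SBO}, their orbits inheriting $\chi$-summability and $\epsilon$-weak temperability from $x$ on one side and from the uniform hyperbolicity of $o(p)$ on the other; this is the mechanism behind \cite[Lemma~3.11]{BCS} and the homoclinic-class/irreducible-component correspondence of \cite[\textsection~12]{Sarig}. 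Hence $R(x)\sim R(p)\sim v'$, so $\ul{R}(x)\in\langle v'\rangle^{\mathbb{Z}}=\tilde{\Sigma}$, and $\widehat{\mu}$ is carried by $\tilde{\Sigma}$.

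The main obstacle is step (ii): turning the transverse intersections that define membership in $H(p)$ into honest admissible paths in the graph on $\mathcal{R}$. This requires both that the heteroclinic points of $x$ and $o(p)$ are codable (lie in $\HWT_\chi^\star$), which should follow from $\chi$-summability/$\epsilon$-weak-temperability estimates in the spirit of Lemma~\ref{ForUniformHyperbolicity}, and the product-structure machinery of \cite{Sarig,SBO} matching such intersections with the combinatorics of the graph; the boundary-avoidance comparisons between the itinerary of a heteroclinic point and that of $x$ (resp.\ $o(p)$) must also be handled with care, and similarly for the uniqueness reduction in step (i), which is what pins down the irreducible component of $\ul{R}(p)$ as $\tilde{\Sigma}$. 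Everything genuinely new -- the recurrence lemma and the lift -- is short; the rest is an adaptation of the homoclinic-class/irreducible-component correspondence already used in \cite{BCS,Sarig}.
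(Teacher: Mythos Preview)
Your approach is genuinely different from the paper's, and it aims at a strictly stronger statement: you try to show that for $\mu$-a.e.\ $x$ the \emph{canonical} itinerary $\ul{R}(x)$ already lies in $\tilde{\Sigma}$, whence the recurrence lemma finishes. The paper, by contrast, never claims $\ul{R}(x)\in\tilde{\Sigma}$. Instead, for each $x\in H(p)\cap\widehat{\pi}[\Sig^{\#\#}]$ it takes \emph{any} coding $\ul{v}\in\Sig^{\#\#}$, uses the periodic approximants $\ul{v}^{(i)}$ built in Proposition~\ref{irreduciblecoding} and their finitely-lettered $\tilde{\Sigma}$-codings $\ul{u}^{(i)}$, passes to a limit $\ul{u}\in\tilde{\Sigma}$ coding $x$, and then uses the finiteness of $\epsilon$-affiliation (Definition~\ref{sigmasharp}(2) and the remark after it) together with pigeonhole to force $\ul{u}\in\Sig^\#$. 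The key point is that the symbols of $\ul{u}$ at the recurrence times of $\ul{v}$ are all affiliated to a single symbol $w$ and hence range over a finite set. This completely avoids the boundary and codability issues you raise.

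The genuine gap is your step~(ii). To get a path $R(x)\to R(p)$ in $\mathcal{R}$ from the dynamical intersection $W^u(x)\pitchfork W^s(o(p))$, you need a heteroclinic point $t$ whose \emph{itinerary} $\ul{R}(t)$ actually passes through some $R(f^{-N}(x))$ on the past side and some $R(f^{N'}(p))$ on the future side. Nothing in the machinery of \cite{Sarig,SBO} gives this for a general (non-periodic, non-uniformly hyperbolic) $x$: the dynamical local unstable manifold of $x$ need not coincide with the symbolic one $W^u(\ul{R}(x))$ away from boundary points, and you have no mechanism to perturb $t$ off the boundary while keeping it heteroclinic to both $x$ and $o(p)$. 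Moreover, while $\chi$-summability of $t$ is plausible via \cite[Lemma~4.5]{SBO} and Lemma~\ref{ForUniformHyperbolicity}, its $\epsilon$-weak temperability on the past side requires a uniform comparison $Q_\epsilon(f^{-n}(t))\asymp Q_\epsilon(f^{-n}(x))$ that you assert but do not prove. Your step~(i) has a similar boundary-avoidance issue: the set $\bigcup_{R}(\overline{R}\setminus R)$ is not shown to be nowhere dense in this countable non-uniform setting, so the uniqueness argument for the coding of $p$ is incomplete. In the paper these difficulties simply do not arise, because the argument never needs to identify which irreducible component contains $\ul{R}(x)$; it only needs \emph{some} $\tilde{\Sigma}^\#$-preimage, and the affiliation-pigeonhole trick supplies it.
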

\begin{proof} $p$ is clearly in $H(p)\cap\widehat{\pi}[\Sig^{\#\#}]$, and so it is has a coding in $\widetilde{\Sigma}$. $\forall\ul{v}\in\widetilde{\Sigma}^\#$, $\widehat{\pi}(\ul{v})\in H(p)$ by the irreducibility of $\widetilde{\Sigma}$ (and since $\widehat{\pi}[\Sig^\#]=\HWT_\chi^\star$), thus the inclusion $\subseteq$ is evident. We are therefore left to show only the inclusion $\supseteq$. Given each $x\in H(p)\cap \widehat{\pi}[\Sig^{\#\#}](=H(p)$ modulo all conservative measures on $H(p)$), $x$ has a coding $\ul{v}\in \Sig^{\#\#}$ as in equation \eqref{periodicform}. In step 2 of Proposition \ref{irreduciblecoding} we describe a corresponding sequence periodic chains which converge to $\ul{v}$. Call these chains $\{\ul{v}^{(i)}\}_{i\geq0}$. By step 5 of Proposition \ref{irreduciblecoding}, $\{\widehat{\pi}(\ul{v}^{(i)})\}_{i\geq0}$ all have codings made of finitely many letters in $\tilde{\Sigma}$, denoted by $\{\ul{u}^{(i)}\}_{i\geq0}$, and by step 7 of Proposition \ref{irreduciblecoding} we can assume w.l.o.g. that $\{\ul{u}^{(i)}\}_{i\geq0}$ converge to some limit $\ul{u}\in\tilde{\Sigma}$, with $\widehat{\pi}(\ul{u})=x$. Let $\{v_{j_l}\}_{l\in\mathbb{N}}$ (where $j_l\uparrow\infty$,$\{j_l\}_{l\geq0}\subseteq\mathbb{N}$) be a subsequence of the symbols of $\ul{v}$ which is constant (it exists since $\ul{v}\in\Sig^\#$); denote by $w$ the symbol which satisfies $v_{j_l}=w$, $\forall l\geq0$. For all $l\geq0\exists i_l$ s.t. $\forall i>i_l$, $v_{j_l}^{(i)}=v_{j_l}=w$. If $i$ is big enough so $d(\ul{u}^{(i)},\ul{u})\leq e^{-j_l}$, then $u^{(i)}_{j_l}=u_{j_l}$. 
$\ul{v}^{(i)},\ul{u}^{(i)}$ both code the same point and lie in $\Sig^\#$, therefore $u^{(i)}_{j_l}$ is affiliated to $v^{(i)}_{j_l}$. Whence, $w$ is affiliated to $u_{j_l}$. Since $\#\{v':v'\text{ is affiliated to }w\}<\infty$, we get by the pigeonhole principle that some symbol must repeat in $\ul{u}$ for infinitely many positive indices. Similarly it follows that some symbol must repeat in $\ul{u}$ for infinitely many negative indices. Therefore, $\ul{u}\in \tilde{\Sigma}^\#$, and $\widehat{\pi}(\ul{u})=x$.
\end{proof}

\bibliographystyle{alpha}
\bibliography{Elphi}
\end{document}